\newcommand{\stkout}[1]{\ifmmode\text{\sout{\ensuremath{#1}}}\else\sout{#1}\fi}
\newtheorem{mtheorem}{Theorem}
\newtheorem{theorem}{Theorem}[section]
\newtheorem{lemma}[theorem]{Lemma}
\newtheorem{prop}[theorem]{Proposition}
\newtheorem*{question}{Question}
\theoremstyle{definition}
\newtheorem{definition}[theorem]{Definition}
\newtheorem{rmk}[theorem]{Remark}
\newcommand{\N}{\mathbb{N}} 
\newcommand{\R}{\mathbf{R}} 
\newcommand{\C}{\mathbb{C}} 
\newcommand{\Sp}{\mathbf{S}} 
\DeclareMathOperator{\curv}{curv}
\DeclareMathOperator{\diam}{diam}
\DeclareMathOperator{\Inj}{Injrad}
\DeclareMathOperator{\Sec}{Sec}
\DeclareMathOperator{\Rm}{Rm}
\DeclareMathOperator{\Ric}{Ric}
\newcounter{dccomment}
\newcounter{mzcomment}
\newcounter{amcomment}
\begin{document}
\title[Ricci flow from singular spaces with bounded curvature]{Ricci flow from singular spaces with bounded curvature}

\author[D. CORRO]{Diego Corro$^{*}$}
\address[D. CORRO]{Fakultät für Mathematik\\ 
Karlsruher Institut für Technologie\\
Karlsruhe\\
Deut\-sch\-land.}
\curraddr{School of Mathematics, Cardiff University, Cardiff, UK.}
\email{diego.corro.math@gmail.com}
\thanks{$^{*}$Supported by the DFG (281869850, RTG 2229 ``Asymptotic Invariants and Limits of Groups and Spaces''),  DGAPA postdoctoral Scholarship of the Institute of Mathematics - UNAM, UKRI Future Leaders Fellowship [grant number MR/W01176X/1; PI: J Harvey], and DFG-Eigenestelle Fellowship CO 2359/1-1}

\author[A. MORENO]{Adam Moreno}
\address[A. MORENO]{Mathematics and Statistics Department\\
Amherst College\\
220 South Pleasant Street\\
Amherst, MA 01002\\
United States of America.}
\curraddr{West Shore Community College\\
3000 N Stiles Rd\\ 
Scottville, MI 49454\\
United States of America}
\email{amoreno@amherts.edu}

\author[M. ZAREI]{Masoumeh Zarei$^\dagger$}
\address[M. ZAREI]{Fakultät Mathematik\\
 Universität	Münster\\
Münster\\
Deutschland.}
\curraddr{Fachbereich Mathematik, Universität Hamburg, Hamburg, Deutschland}
\email{masoumeh.zarei@uni-hamburg.de}
\thanks{$^\dagger$Acknowledges support  from Deutsche Forschungsgemeinschaft (DFG, German Research Foundation) under Germany's Excellence Strategy EXC 2044-390685587, Mathematics M\"unster: Dynamics-Geometry-Structure, from  DFG grant ZA976/1-1 within the Priority Program SPP2026 "Geometry at Infinity", and  partial support from  the Austrian Science Fund (FWF) [Grant DOI 10.55776/STA32]}

\subjclass[2020]{53E20, 35K40, 53C20, 53C21, 53C23}
\keywords{Ricci flow, positive sectional curvature, Alexandrov spaces, pinched spaces}

\setlength{\overfullrule}{5pt}

\begin{abstract}
We show the existence of a solution to the Ricci flow with a compact length space of bounded curvature, i.e., a space that has curvature bounded above and below in the sense of Alexandrov, as its initial condition. We show that this flow converges in the $C^{1,\alpha}$-sense to a $C^{1,\alpha}$-continuous Riemannian manifold which is isometric to the original metric space. Moreover, we prove that the flow is uniquely determined by the initial condition, up to isometry.
\end{abstract}

\maketitle

\section{Introduction}


A metric flow $(M, g(t))_{(0, T)}$ on a compact manifold $M$ satisfying the Ricci-flow equation
\begin{linenomath}
\begin{equation}\label{EQ: Ricci flow}
	\frac{\partial g}{\partial t}(t) = -2\Ric\Big(g(t)\Big)\tag{\mbox{Ricci}},
\end{equation}
\end{linenomath} 
is said to have a metric space $(X, d)$ as its \emph{initial condition} if  $(X,d)$ is the Gromov-Hausdorff limit  of $(M,d_{g(t)})$ as $t\to 0$, see \cite{Richard2018}, \cite{Coffey}. The question of what assumptions on $(X, d)$ guarantee the existence of a Ricci flow $(M,g(t))$ with $(X, d)$ as its initial condition, and how further regularity assumptions on  $(X, d)$ and $(M, g(t))$ can improve the convergence  (cf. \cite[Problem~1.1]{DeruelleSchulzeSimon2022}) has been addressed in \cite{BamlerCabezas-RivasWilking2019}, \cite{Coffey}, \cite{DeruelleSchulzeSimon2022}, \cite{Richard2018}, \cite{SimonTopping2021}, \cite{Topping2021}. The study of Ricci flows with singular metric spaces as initial conditions has recently led to the resolution of Hamilton's pinching conjecture in \cite{DeruelleSchulzeSimon2022arxiv}, \cite{DeSchSi24}, \cite{LeeTopping22}.  

In this work, we consider a compact length space  $(X, d)$ whose curvature is bounded above and below in the sense of Alexandrov,  i.e., $(X,d)$ is locally (and thus globally) an Alexandrov space and locally a $\mathrm{CAT}$-space  (for more details, see \cite{BuragoBuragoIvanov}). We prove the existence and uniqueness of a solution $(M,g(t))$ to \eqref{EQ: Ricci flow} with $(X,d)$ as its initial condition. Moreover, we show that in this setting, the regularity of the convergence can be improved. Specifically, using the fact that 
$X$ is a smooth manifold and that $d$ is induced by a $C^{1,\alpha}$-continuous Riemannian metric $\hat{g}$ \cite{BerestovskijNikolaev1993}, we prove the existence of a $C^{1,\alpha}$-continuous metric $g$ on $M$ such that  $g(t)\to g$ in the $C^{1,\alpha}$-topology, and $(M,g)$ is $C^{1,\alpha}$-isometric to $(X,\hat{g})$.

\begin{mtheorem}\th\label{MT: Main Theorem}
Let $(X,d)$ be a compact length space with curvature bounded above and below in the sense of Alexandrov. Then $X$ is homeomorphic to a smooth manifold $M$,  the distance function $d$ is induced by a $C^{1,\alpha}$-continuous Riemannian metric $g$, and, moreover, there exists a unique (up to isometry) smooth Ricci flow $(M,g(t))$ 
that  converges in the $C^{1,\alpha}$-topology to $(M,g)$.  
\end{mtheorem}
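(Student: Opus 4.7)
The first two conclusions are not ours: by Berestovskij--Nikolaev \cite{BerestovskijNikolaev1993}, a compact length space with curvature bounded above and below in the sense of Alexandrov is homeomorphic to a smooth manifold $M$, and $d$ comes from a Riemannian metric $g$ whose components in harmonic coordinates lie in $C^{1,\alpha}$ for every $\alpha\in(0,1)$. The whole work, then, is to produce a Ricci flow $(M,g(t))$ issuing from $g$, to establish $C^{1,\alpha}$-convergence $g(t)\to g$ as $t\to 0$, and to prove uniqueness.

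\textbf{Existence via smooth approximation.} I would first approximate the $C^{1,\alpha}$-metric $g$ by smooth Riemannian metrics $g_k$ on $M$. Working in a finite harmonic atlas coming from \cite{BerestovskijNikolaev1993}, a standard mollification produces $g_k \to g$ in $C^{1,\alpha}$. The crucial point is to pass the two-sided Alexandrov bound on $(X,d)$ to a uniform two-sided \emph{sectional} curvature bound $|\Sec_{g_k}|\le K$ that is independent of $k$; this is the heart of the argument and will rest on the fact that the Alexandrov comparison angles for $(X,d)$ control the infinitesimal Riemann tensor of the mollified metric through the explicit representation of $\Rm(g_k)$ in harmonic coordinates. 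With uniform sectional curvature bounds, injectivity radius bounds, and diameter control, I would apply the short-time existence result of Bamler--Cabezas-Rivas--Wilking \cite{BamlerCabezas-RivasWilking2019} (or the older work of Shi) to each $g_k$, obtaining Ricci flows $(M,g_k(t))$ on a common interval $[0,T]$ with uniform curvature and derivative estimates $|\nabla^j \Rm(g_k(t))|\le C_j t^{-j/2}$. A compactness/diagonal argument then extracts a subsequential smooth limit $(M,g(t))$ for $t\in(0,T]$, which satisfies \eqref{EQ: Ricci flow}.

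\textbf{Convergence at $t=0$ in $C^{1,\alpha}$.} The pseudolocality-type estimates of \cite{BamlerCabezas-RivasWilking2019} together with the uniform two-sided curvature bounds for the $g_k$ give, in a fixed harmonic atlas, uniform $C^{1,\alpha'}$-bounds for $g(t)$ on $[0,T]$ for some $\alpha'>\alpha$. Combined with the equation $\partial_t g = -2\Ric(g)$ and the fact that $\Ric(g(t))$ is bounded in $C^{\alpha''}$, Arzel\`a--Ascoli yields a $C^{1,\alpha}$-subsequential limit $g_\infty = \lim_{t\to 0}g(t)$. Identification $g_\infty = g$ follows because the Gromov--Hausdorff limit as $t\to 0$ is $(X,d)$, and the $C^{1,\alpha}$ Riemannian metric inducing $d$ is unique up to isometry by \cite{BerestovskijNikolaev1993}.

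\textbf{Uniqueness.} This is the point that I expect to be the main technical obstacle, because the initial datum is only $C^{1,\alpha}$, so Hamilton--DeTurck uniqueness does not apply directly. My plan is to argue as in \cite{Richard2018}, \cite{SimonTopping2021}, \cite{Coffey}: suppose $(M,g_1(t))$ and $(M,g_2(t))$ are two Ricci flows issuing from $g$ in the stated sense. The uniform two-sided curvature bounds coming from the construction above persist at the limit level, so each $g_i(t)$ satisfies $|\Rm(g_i(t))|\le K$ for $t\in(0,T]$ with $K$ independent of $i$. One then harmonic-maps the two flows onto a common background (DeTurck trick with a $C^{1,\alpha}$ reference metric) to obtain two strictly parabolic solutions with the same $C^{1,\alpha}$ initial value and uniformly bounded curvature; Chen's uniqueness theorem for bounded-curvature flows then forces them to coincide up to diffeomorphism, hence up to isometry of the initial $g$. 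The delicate input is showing that the harmonic map heat flow used in DeTurck gauge fixing can be initialized from a $C^{1,\alpha}$ map and produces a $C^{1,\alpha}$-close family of diffeomorphisms up to $t=0$; this is where I expect most of the analytic work to concentrate, and I would model the argument on \cite{Coffey} and \cite{SimonTopping2021}.
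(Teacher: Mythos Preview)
Your overall architecture is sound, but you are reinventing a wheel at the step you yourself call ``the heart of the argument.'' The Berestovskij--Nikolaev theorem you already cite (stated in the paper as \th\ref{T: Approximating bounded spaces}) does more than produce a $C^{1,\alpha}$ metric and a harmonic atlas: it furnishes, for any $\varepsilon>0$, a sequence of \emph{smooth} metrics $g_i^\varepsilon$ on $X$ with $k-\varepsilon\le \Sec(g_i^\varepsilon)\le K+\varepsilon$ converging to $g$ in $C^{1,\alpha}$. The paper's proof of \th\ref{MT: Main Theorem} is therefore essentially two lines: take this sequence, observe (\th\ref{L: Upper and lower curvature bounds imply that injectivity radius is positive}) that it has a uniform injectivity-radius lower bound, and feed it into \th\ref{MT: Main Theorem 1}. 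Your plan to mollify $g$ yourself and then extract two-sided sectional bounds from the Alexandrov comparison is not obviously wrong, but it is not ``standard'': a naive mollification of a $C^{1,\alpha}$ tensor gives no control on second derivatives, and the argument linking comparison angles to $\Rm(g_k)$ in harmonic coordinates is exactly the delicate content of \cite{BerestovskijNikolaev1993}. You should cite it rather than re-derive it.

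For the remaining two pieces your route diverges from the paper's in ways worth noting. For existence and $C^{1,\alpha}$ convergence of the limit flow the paper uses Hamilton's compactness (\th\ref{T: Modified Hamiltons compactness theorem}), the Simon--Topping/Deruelle--Schulze--Simon distance estimates (\th\ref{T: Existence of limit}), and Peters' $C^{1,\alpha}$ compactness (\th\ref{T: Compactness in C alpha topology}); your pseudolocality/Shi route would also work but is less direct here since you already have a global two-sided curvature bound, which makes pseudolocality overkill. For uniqueness the paper avoids the DeTurck/harmonic-map-heat-flow analysis you anticipate as the main obstacle: instead it follows \cite{BurkhardtGuim2019}, smoothing the $C^{1,\alpha}$ isometry between the two limit metrics via Karcher's center of mass to obtain smooth $(1+\delta_j)$-bi-Lipschitz maps between time slices $g_\infty(t_j)$ and $h_\infty(t_j)$, and then invoking \cite[Lemma~5.8]{BurkhardtGuim2019}. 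This sidesteps entirely the problem of initializing a gauge-fixing flow from $C^{1,\alpha}$ data, so if you pursue your approach you will be doing genuinely more analytic work than is needed.
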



In fact, Theorem~\ref{MT: Main Theorem} follows directly from  Theorem~\ref{MT: Main Theorem 1} below. By the work of Berestovskii and Nikolaev \cite{BerestovskijNikolaev1993},   a length space $(X,d)$ with curvature bounded above and below in the sense of Alexandrov is the Gromov-Hausdorff limit of a sequence $(\bar{M}_n,\bar{g}_n)$ of manifolds with uniform upper and lower sectional curvature bounds. Moreover, in this setting, one can establish a uniform lower bound for the injectivity  radii of $(\bar{M}_n,\bar{g}_n)$. This allows us to apply Theorem~\ref{MT: Main Theorem 1}  to conclude the proof of Theorem~\ref{MT: Main Theorem}.

\begin{mtheorem}\th\label{MT: Main Theorem 1}
Let $(X,d)$ be a Gromov-Hausdorff limit of a sequence $\{(M_n,g_n)\}_{n\in \N}$ of closed Riemannian manifolds with $\delta\leq \mathrm{Sec}(g_n)\leq \Delta$ and $\mathrm{Injrad}(g_n)\geq v>0$. Then there exist a  constant $T=T(\dim X, \delta, \Delta)$, a smooth manifold $M$, and  a smooth complete solution $(M,g_\infty(t))_{t\in (0, T)}$ to \eqref{EQ: Ricci flow equation}  with $(X, d)$ as its initial condition.  Moreover, $M$ is homeomorphic to $X$, and there exists a  $C^{1,\alpha}$-continuous Riemannian metric $g$ on $M$ such that  $(M,d_{g})$ is isometric to $(X,d)$ and  $g_\infty(t)$ converges in $C^{1,\alpha}$-topology to $g$ as $t\to 0$. 
Furthermore, the Ricci flow $(M,g_\infty(t))$ depends uniquely, up to isometry, on $(X,d)$, i.e. it is independent of the choice of approximating sequence. 
\end{mtheorem}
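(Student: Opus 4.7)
The plan is to combine Cheeger--Gromov compactness (on the initial manifolds) with Hamilton's short-time existence and compactness theory for Ricci flows, and then to promote the convergence at $t=0$ from Gromov--Hausdorff to $C^{1,\alpha}$ using parabolic smoothing together with distance-distortion estimates along the flow. First, the uniform two-sided sectional curvature bounds $\delta\le\mathrm{Sec}(g_n)\le\Delta$ together with the lower injectivity radius bound $\mathrm{Injrad}(g_n)\ge v>0$ put the sequence $\{(M_n,g_n)\}$ into the classical Cheeger--Gromov compactness class. Passing to a subsequence, there is a smooth manifold $M$ and a $C^{1,\alpha}$-continuous Riemannian metric $g$ on $M$ such that $(M_n,g_n)\to (M,g)$ in the $C^{1,\alpha}$-topology, and in particular in the Gromov--Hausdorff sense. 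Since the Gromov--Hausdorff limit is unique as a metric space, we identify $(M,d_g)$ with $(X,d)$, which gives the homeomorphism $M\approx X$ and the $C^{1,\alpha}$-Riemannian representative $g$ of $d$.

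Second, for each $n$ apply Hamilton's short-time existence to obtain a solution $(M_n,g_n(t))$ to \eqref{EQ: Ricci flow} with initial datum $g_n$. The key analytic input is a uniform existence time $T=T(\dim X,\delta,\Delta)>0$ together with uniform Shi-type bounds $|\nabla^k\mathrm{Rm}|(\cdot,t)\le C_k\, t^{-k/2-1}$ for all $t\in(0,T)$ and all $n$. Combined with a uniform injectivity radius lower bound (which either propagates along the flow from $v$ under bounded curvature, or is produced by Perelman's no-local-collapsing after short time), Hamilton's compactness theorem for Ricci flows furnishes a further subsequence, together with a choice of basepoints and diffeomorphisms, along which $(M_n,g_n(t))$ converges smoothly on compact subsets of $M\times(0,T)$ to a smooth complete Ricci flow $(M,g_\infty(t))_{t\in(0,T)}$.

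Third, I would verify that $g_\infty(t)$ extends to $t=0$ in $C^{1,\alpha}$ with limit $g$. Integrating $\partial_t g=-2\mathrm{Ric}$ against the Shi bounds gives $C^0$-control $\|g_\infty(t)-g_\infty(s)\|_{C^0}\le C(t-s)^{1/2}$ (or similar), so the flow extends continuously to $t=0$; by the uniqueness of the Gromov--Hausdorff limit this $C^0$-limit must coincide with $g$. Working in a harmonic-coordinate atlas adapted to $g$ (whose existence is ensured by the two-sided sectional curvature bounds on the approximators and the regularity theory of Berestovskii--Nikolaev), one then combines the $C^0$-convergence with interior parabolic regularity for $\partial_t g = -2\mathrm{Ric}$, viewed as a quasilinear parabolic system for the $g_{ij}$'s, to upgrade this to $C^{1,\alpha}$-convergence as $t\to 0$.

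Finally, for uniqueness, suppose $\{(M_m',g_m')\}$ is a second approximating sequence producing a limit flow $(M',g_\infty'(t))$. Cheeger--Gromov yields a $C^{1,\alpha}$-isometry $\varphi\colon (M,g)\to(M',g')$ between the initial-time limits, both of which are isometric to $(X,d)$. I expect the main obstacle to lie here: one must show that this $t=0$ isometry extends to an isometry of the two Ricci flows on $(0,T)$. The strategy is to pull back by $\varphi$ and compare the two flows $g_\infty(t)$ and $\varphi^\ast g_\infty'(t)$ on $M\times(0,T)$; both have the same $C^{1,\alpha}$-limit $g$ as $t\to 0$ and uniformly bounded curvature on $(0,T)$. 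An energy-type Gr\"onwall argument in the spirit of Hamilton's and Chen--Zhu's uniqueness theorems (or a Kotschwar-style reduced energy), applied using the $C^{1,\alpha}$-agreement at $t=0$ as boundary input, then forces $g_\infty(t)\equiv\varphi^\ast g_\infty'(t)$ for all $t\in(0,T)$. This uniqueness in turn removes the need to pass to a subsequence in the earlier steps and completes the proof.
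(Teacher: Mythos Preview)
Your overall architecture---uniform short-time existence via the doubling estimate, Hamilton's compactness to extract a limit flow on $(0,T)$, and then convergence at $t=0$---matches the paper. Two points deserve comment.

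For the $C^{1,\alpha}$-convergence of $g_\infty(t)$ to $g$ as $t\to 0$, the paper does not go through parabolic Schauder theory in harmonic coordinates. Instead it observes that along the limit flow one retains uniform two-sided curvature bounds, a uniform diameter bound (from the distance-distortion estimate of Simon--Topping, Theorem~\ref{T: Existence of limit}), and a uniform injectivity-radius lower bound (propagated via the volume argument in the proof of Theorem~\ref{T: Modified Hamiltons compactness theorem}). Hence every sequence $t_j\to 0$ sits in the Peters $C^{1,\alpha}$-compactness class (Theorem~\ref{T: Compactness in C alpha topology}); extracting a $C^{1,\alpha}$-convergent subsequence and matching it with the already-identified Gromov--Hausdorff limit $(X,d)$ gives the result. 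This is cleaner than your route: interior parabolic regularity by itself does not give estimates up to $t=0$, and the initial datum is only $C^{1,\alpha}$, so boundary Schauder theory is not directly available.

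The uniqueness step is where your sketch has a genuine gap. The Hamilton/Chen--Zhu/Kotschwar energy arguments compare two smooth flows that agree at some time where both are defined and smooth; their conclusion is that the energy stays zero because it starts at zero. Here the two flows are only defined on $(0,T)$, and they agree merely in the sense that their $C^{1,\alpha}$-limits as $t\to 0$ are isometric via a map $\varphi$ that is itself only $C^{1,\alpha}$. In particular $\varphi^\ast g'_\infty(t)$ need not even be a smooth Ricci flow, and the Kotschwar energy (which involves the difference of curvatures) has no reason to vanish in the limit $t\to 0$ under only $C^{1,\alpha}$ control. The paper handles this differently: it first uses Karcher's center-of-mass construction to approximate the $C^{1,\alpha}$ isometry $\theta$ by \emph{smooth} $(1+\delta_j)$-bi-Lipschitz maps $\theta_j$ between time slices $g_\infty(t_{i_j})$ and $h_\infty(t_{i_j})$, and then feeds this into the argument of \cite[Lemma~5.8]{BurkhardtGuim2019}, which is specifically designed to conclude uniqueness from an asymptotically isometric sequence of smooth maps rather than from pointwise agreement at a singular initial time. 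You should replace the energy-method sketch with this mechanism.
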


Note that the lower bound on the sectional curvature in \th\ref{MT: Main Theorem 1} can be any real number, including negative values. 

We point out that the condition on the injectivity radius in \th\ref{MT: Main Theorem 1} is necessary to prevent collapse. For example, there is a sequence of Riemannian metrics  on the  $3$-sphere $\Sp^3$ with bounded sectional curvature converging in the Gromov-Hausdorff topology to a round $2$-sphere $\Sp^2(\frac{1}{2})\subset \R^3$ of radius $1/2$ (see \cite[Section 3.4.2]{Petersen}).

\begin{rmk}
The condition of $\mathrm{Injrad}(g_n)\geq \nu>0$ is always satisfied in even dimensions and positive pinching, i.e., $\delta>0$ by the classical work of Klingenberg \cite[Theorem 1]{Klingenberg1959}. In particular we have $\mathrm{Injrad}(g_n)\geq \pi/(2\sqrt{\Delta})>0$. The same remark applies to the other results in the introduction.
\end{rmk}


Before proceeding, let us add several remarks on the assumptions of Theorems~\ref{MT: Main Theorem} and \ref{MT: Main Theorem 1}. First, note that it is unclear whether the existence of an upper bound on the curvature of the length space $(X, d)$ is necessary to prove the existence of solutions to \eqref{EQ: Ricci flow} with $(X,d)$ as the initial condition. In fact, there are some studies proving the existence of solutions to \eqref{EQ: Ricci flow} whose initial data is an Alexandrov space without any upper curvature bound conditions. For instance, in dimension $2$, when $(X,d)$ is a $2$-dimensional Alexandrov space, the existence of a unique  flow $(M,g(t))$ solving \eqref{EQ: Ricci flow} with $(X,d)$ as initial condition has been proven in \cite[Corollary 0.8]{Richard2018} in the case when   $X$ is compact,  and in \cite{Coffey} for a  general $2$-dimensional Alexandrov space. In dimension $3$, similar results waere proven for polyhedra \cite{LebevaMatveevPetruninShevchishin2015} providing a resolution to the Smoothing conjecture for these spaces (see \cite[2.1 Smoothing Conjecture]{LebevaMatveevPetruninShevchishin2015}). Moreover, in \cite{GianniotisSchulze2018}, the existence of a Ricci flow with initial conditions given by a space that is a Riemannian manifold except at isolated singular points--where the geometry is that of a Euclidean cone over a smooth Riemannian manifold (depending on the point)--was proven. Even in the case when the closed Alexandrov space  $X$ is a limit of non-collapsed closed manifolds (and, by Perelman's Stability Theorem, a topological manifold), it would be of interest to  determine whether it is possible to remove the upper curvature bound requirement for obtaining a Ricci flow with $X$ as the initial condition. It is noteworthy to mention that the upper curvature condition in \th\ref{MT: Main Theorem 1} is used in our proof to apply Hamilton's compactness theorem and ensure the existence of the limit flow.

Furthermore, the assumptions about the curvature of the converging sequence in Theorem~\ref{MT: Main Theorem 1},  are of significant importance to guarantee some type of convergence of the limit Ricci flow to the initial  conditions. To elaborate on this, let $(X,d)$  be the $2$-torus with the standard flat metric. In \cite{Topping2021}, the author constructs  a sequence $\{(M_i,g_i)\}_{i\in \N}$ of metrics on the $2$-dimensional torus converging in the Gromov-Hausdorff sense to $(X,d)$. This sequence is such that the sequence  of Ricci flows $\{(M_i,g_i(t))\}_{i\in \N}$ with initial conditions $g_i(0) = g_i$ have standard curvature decay uniformly controlled by $C/t$, for some constant $C$, and they converge for all $t$ to a Riemanninan metric $g_\infty$, which is the flat metric on the torus with twice the volume of the standard flat metric. As such, the  constant Ricci flow $(M,g_\infty(t)) = (M,g_\infty)$ does not converge to $(X,d)$ in the Gromov-Hausdorff sense as $t\to 0$. Moreover, as pointed out in \cite{DeruelleSchulzeSimon2022}, given a sequence  $\{(M_i,g_i)\}_{i\in \N}$  converging  to $(X,d)$ in the Gromov-Hausdorff sense, and assuming the existence of a limit Ricci flow $(M,g_\infty(t))$ for the sequence of Ricci flows $(M_i,g_i(t))$, there may not exist a limit space for for $(M,g_\infty(t))$ as $t\to 0$. The authors then provide sufficient conditions for the existence of a limit in such  cases.  By \cite[Lemma 3.1]{SimonTopping2021} having control over the operator norm of the Riemannian curvature and a uniform lower bound for the Ricci curvature of the flow $(M,g_\infty(t))$ are sufficient conditions for the existence of a limit as $t\to 0$ (locally). Moreover, as pointed out in \cite{DeruelleSchulzeSimon2022} with this control on the Riemannian curvature tensor and the uniform lower bound of Ricci curvature, the limit of $(M,g_\infty(t))$ is isometric to $(X,d)$. 

Another result, in arbitrary dimension, concerning the existence of the Ricci flow for Gromov-Hausdorff limits of manifolds under certain curvature assumptions is given in \cite[Corollary 4]{BamlerCabezas-RivasWilking2019}. There, the authors assume some lower bounds on the curvature operator along with uniform bounds on diameter and volume. We note that \th\ref{MT: Main Theorem 1} does not follow from \cite[Corollary4]{BamlerCabezas-RivasWilking2019}, as we do not assume a uniform lower bound on the curvature operator. Moreover, in our setting, the convergence is stronger--namely, $C^{1, \alpha}$ rather than uniform convergence. 

Next, we establish an equivariant version of Theorem A, relying on results from equivariant Gromov-Hausdorff convergence (see, for example, \cite{Harvey2016}, \cite{AlAttar2024}, and \cite{Ahumada2024}).

\begin{mtheorem}\th\label{MT: Equivariant}
Let $(X,d)$ be a compact length space with curvature bounded above and below in the sense of Alexandrov, and let $G\leq \mathrm{Iso}(X,d)$ be a closed subgroup of isometries acting effectively on $X$. Then there exists a unique (up to isometry) smooth Ricci flow $(M,g(t))$ with  $(X, d)$ as its initial condition, such that   $G\leq \mathrm{Iso}(M,g(t))$ is a closed subgroup that acts effectively on $M$. Moreover,  $(M,d_{g(t)},G)$ converges in the equivariant Gromov-Hausdorff sense to $(X,d, G)$.
\end{mtheorem}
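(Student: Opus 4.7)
The plan is to execute the proof of Theorem~\ref{MT: Main Theorem 1} in the equivariant category by carrying a $G$-action through an approximating sequence and through the Hamilton compactness limit. Since $X$ is compact, $G\leq\Iso(X,d)$ is a compact Lie group. By Theorem~\ref{MT: Main Theorem}, $(X,d)$ is isometric to a $C^{1,\alpha}$-Riemannian manifold $(X,\hat g)$ on which $G$ acts by $C^{1,\alpha}$-isometries.

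The first step is to produce a $G$-equivariant analogue of the Berestovskii--Nikolaev approximating sequence: a sequence $(M_n,g_n)$ of smooth Riemannian manifolds with $G$ acting isometrically and effectively, with uniform two-sided sectional curvature bounds and a uniform positive injectivity radius lower bound, converging to $(X,d,G)$ in the equivariant Gromov--Hausdorff sense (in the framework of \cite{Harvey2016}, \cite{AlAttar2024}, \cite{Ahumada2024}). Since the $C^{1,\alpha}$ structure on $X$ is $G$-invariant, such a sequence can be constructed by performing a $G$-equivariant smoothing on $X$---for instance, by using a $G$-invariant partition of unity from equivariant charts on orbits of $G$ together with a heat-kernel or mollifier-type smoothing on each chart, or alternatively by Haar averaging of a given approximating sequence while controlling the $C^{1,\alpha}$-distance to $\hat g$.

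Next, for each $n$ one runs the smooth Ricci flow $(M_n,g_n(t))$ starting from $g_n$. Since $g_n$ is $G$-invariant and smooth Ricci flow is unique among smooth flows from smooth initial data (by DeTurck's trick), pullback by any $\phi\in G$ preserves the flow, so $G$ acts by smooth isometries on each slice $(M_n,g_n(t))$. Hamilton's compactness as in the proof of Theorem~\ref{MT: Main Theorem 1} produces a subsequential Cheeger--Gromov limit flow $(M,g(t))_{t\in(0,T)}$ with $(X,d)$ as initial condition, and the equivariant version of Cheeger--Gromov compactness for compact Lie group actions extends the $G$-actions to a smooth isometric $G$-action on each slice $(M,g(t))$. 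Closedness follows since $G$ is compact and $\Iso(M,g(t))$ is a Lie group, and effectiveness persists because each approximating action is effective and the induced map $G\to\Iso(M,g(t))$ has trivial kernel by a limiting argument on fixed-point sets.

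Equivariant Gromov--Hausdorff convergence $(M,d_{g(t)},G)\to(X,d,G)$ as $t\to 0$ then follows from the $C^{1,\alpha}$-convergence in Theorem~\ref{MT: Main Theorem}, since the identification $(M,g)\cong(X,\hat g)$ can be arranged to be $G$-equivariant as a limit of equivariant identifications on the approximants. Uniqueness up to equivariant isometry follows by combining the uniqueness in Theorem~\ref{MT: Main Theorem} with an averaging argument over the compact group $G$ in the finite-dimensional space of isometries intertwining the two flows. The main obstacle is the very first step: constructing the $G$-equivariant approximating sequence while retaining the two-sided sectional curvature bounds and the lower injectivity radius bound. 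Since these geometric bounds are not convex functionals of the metric, naive Haar averaging does not automatically preserve them, and one must either adapt the Berestovskii--Nikolaev smoothing procedure to respect the $G$-action from the outset or work on an equivariant refinement using the orbit structure of $G$.
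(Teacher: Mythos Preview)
Your overall strategy---produce a $G$-invariant approximating sequence, use uniqueness of smooth Ricci flow to propagate the $G$-action along each $(M_n,g_n(t))$, pass to the limit flow, and then argue equivariant Gromov--Hausdorff convergence via a triangle inequality---matches the paper's proof exactly. The difference is in how the two hard steps are discharged.

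The ``main obstacle'' you flag at the end is precisely the point where the paper invokes a black box: it cites \cite[Theorem~D]{Ahumada2023}, which furnishes a $G$-invariant version of the Berestovskii--Nikolaev approximation, i.e.\ smooth metrics $g_n^\varepsilon$ on $X$ with $\delta-\varepsilon\le\Sec(g_n^\varepsilon)\le\Delta+\varepsilon$ and $G<\Iso(X,g_n^\varepsilon)$. So your instinct that one must ``adapt the Berestovskii--Nikolaev smoothing procedure to respect the $G$-action from the outset'' is correct, and this has been carried out in the literature; your proposed alternatives (Haar averaging, equivariant mollification) are, as you note, not obviously curvature-preserving and the paper does not attempt them.

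The second place where your sketch is looser than the paper is the passage of the group action to the limit flow. You appeal to a generic ``equivariant version of Cheeger--Gromov compactness''; the paper instead uses \cite[Proposition~3.6]{FukayaYamaguchi1992} to obtain a closed limit group $\Gamma_t\le\Iso(M_\infty,g_\infty(t))$, then \cite[Proposition~4.1]{Harvey2016} (applied in both directions, once to $(M_n,g_n(t))\to(M_\infty,g_\infty(t))$ and once to $(M_\infty,g_\infty(t))\to(X,d)$) to get injective Lie group morphisms, and finally the stability theorem \cite[Theorem~A]{AlAttar2024} to conclude $\Gamma_t\cong G$. This is more careful than a ``trivial kernel by a limiting argument on fixed-point sets'': effectiveness in the limit does not follow automatically, and the paper's route via dimension counting plus equivariant stability is what pins down $\Gamma_t=G$ rather than merely $G\hookrightarrow\Gamma_t$.
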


From \th\ref{MT: Equivariant} and the celebrated work of Brendle and Schoen \cite{BrendleSchoen2009} we obtain the following corollary.

\begin{mtheorem}\th\label{MC: main corollary}
Let $(X,d)$ be a compact length  space of dimension $m$, such that 
\[
\frac{1}{4}<\delta\leq \curv(X,d)\leq 1.
\]
Then $d$ is induced by a $C^{1,\alpha}$-continuous Riemannian metric, and $X$ is homeomorphic to a spherical space form. 
\end{mtheorem}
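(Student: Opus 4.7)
The plan is to combine \th\ref{MT: Main Theorem} with the Brendle--Schoen differentiable sphere theorem and Perelman's stability theorem, using the Berestovskii--Nikolaev smoothing as a bridge. The first assertion of the corollary---that $d$ is induced by a $C^{1,\alpha}$-continuous Riemannian metric on a smooth manifold $M$ homeomorphic to $X$---is immediate from \th\ref{MT: Main Theorem}. Thus the task reduces to identifying $M$ up to homeomorphism with a spherical space form.

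To do this I would invoke the Berestovskii--Nikolaev approximation theorem: since $(X,d)$ has curvature bounded above and below in the sense of Alexandrov, there exists a sequence $(\bar{M}_n,\bar{g}_n)$ of smooth closed Riemannian $m$-manifolds converging to $(X,d)$ in the Gromov--Hausdorff topology and satisfying uniform sectional curvature bounds $\delta_n\le \Sec(\bar{g}_n)\le \Delta_n$ with $\delta_n\to\delta$ and $\Delta_n\to 1$. Because $\delta>1/4$, for all sufficiently large $n$ one has $\delta_n>0$ together with $\delta_n/\Delta_n>1/4$, so each $(\bar{M}_n,\bar{g}_n)$ is pointwise strictly quarter-pinched with strictly positive sectional curvature.

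Next, the Brendle--Schoen differentiable sphere theorem asserts that every such $\bar{M}_n$ is diffeomorphic (in particular homeomorphic) to a spherical space form $\Sp^m/\Gamma_n$. Simultaneously, Klingenberg's injectivity radius estimate in the strictly quarter-pinched positive-curvature regime supplies a uniform lower bound $\Inj(\bar{g}_n)\ge \nu>0$; therefore the Gromov--Hausdorff convergence $(\bar{M}_n,\bar{g}_n)\to(X,d)$ is non-collapsed and $\dim_{\mathrm{Alex}} X = m$. Applying Perelman's stability theorem to the uniform lower sectional curvature bound then yields that $\bar{M}_n$ is homeomorphic to $X$ for all $n$ sufficiently large, and chaining with the preceding diffeomorphism exhibits $X$ as homeomorphic to $\Sp^m/\Gamma_n$.

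The principal subtlety lies in checking that the Berestovskii--Nikolaev smoothing can be arranged so that the two-sided sectional curvature bounds of the approximants converge precisely to the Alexandrov bounds $[\delta,1]$; this is standard but proceeds via convolution of the $C^{1,\alpha}$-metric in harmonic coordinates, and must be traced carefully to preserve the strict inequality $\delta_n/\Delta_n>1/4$ for large $n$. Note that neither the Ricci-flow existence in \th\ref{MT: Main Theorem} nor the equivariant refinement in \th\ref{MT: Equivariant} is strictly required for the topological half of the corollary---the argument passes through the approximating smooth sequence alone---although the $C^{1,\alpha}$-regularity half of the statement does rely essentially on \th\ref{MT: Main Theorem}.
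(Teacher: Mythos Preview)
Your approach is essentially the same as the paper's---Berestovskii--Nikolaev smoothing followed by Brendle--Schoen---but you take two unnecessary detours. First, the version of Berestovskii--Nikolaev stated in the paper (\th\ref{T: Approximating bounded spaces}) already asserts that $X$ itself is a smooth manifold and that the approximating metrics $g_n^{\varepsilon}$ live \emph{on $X$}; hence $\bar M_n = X$ as smooth manifolds from the outset, and there is no need to invoke Perelman's stability theorem to identify them. The paper simply picks a single $\varepsilon_0$ small enough that $\delta-\varepsilon_0>1/4$, rescales so the upper bound becomes $1$, and applies Brendle--Schoen directly to one of these metrics on $X$. Second, the $C^{1,\alpha}$-regularity of the limiting metric is likewise already contained in \th\ref{T: Approximating bounded spaces}; neither \th\ref{MT: Main Theorem} nor the Ricci flow is needed anywhere in the corollary.

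There is also a genuine gap in your Klingenberg step: the injectivity-radius estimate you quote for strictly quarter-pinched manifolds requires simple connectivity in odd dimensions (think of lens spaces), so it does not give a uniform lower bound for arbitrary $\bar M_n$. Fortunately this step is superfluous: the hypothesis already declares $\dim X = m$, so the convergence is non-collapsed by assumption (or, in the paper's framework, via \th\ref{L: Bounded curvature and limit n-dimensional implies bound injectivity radius}), and in any case the whole stability argument is bypassed once you use that the approximating metrics are on $X$ itself.
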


\begin{rmk}
Let $(X,d)$ be an $m$-dimensional, simply connected length space with $1/4<\delta\leq \curv(X,d)$ $\leq 1$. Then the normalized-Ricci flow given by \th\ref{MT: Main Theorem} converges in finite time to the $m$-dimensional round sphere. And given  $G\leq \mathrm{Iso}(X,d)$  any closed subgroup  acting effectively on $X$, then by \th\ref{MT: Equivariant} and \th\ref{MC: main corollary} we conclude that $G$ is a subgroup of $\mathrm{O}(m+1)$.
\end{rmk}

We point out that in the proof of \th\ref{MC: main corollary}, we can find a constant $c>0$ such that $(X,cd)$ is approximated by smooth Riemannian manifolds whose sectional curvatures lie in $(1/4,1]$ via \th\ref{T: Approximating bounded spaces}. Moreover, from \th\ref{T: Approximating bounded spaces} we also conclude that $X$ is homeomorphic to a spherical space form. 

In the case where  $1/4\leq \curv(X,d)\leq 1$, we cannot apply \th\ref{T: Approximating bounded spaces} to obtain a sequence of smooth Riemannian manifolds with sectional curvatures in $[1/4,1]$. In this case, it is still open whether an $m$-dimensional compact simply-connected length space $(X,d)$ with $1/4\leq \curv(X,d)\leq 1$  is homeomorphic to a simply-connected CROSS: $\Sp^m$, $\C P^{m/2}$, $\mathbb{H} P^{m/4}$ or the $16$-dimensional Cayley plane $CaP^2$ (see for example \cite[Theorem 2]{Tuschmann2024}). This is related to the following question:

\begin{question}[Remark 4, p. 221 in \cite{BerestovskijNikolaev1993}]
Given a compact length space $(X,d)$ of dimension $m$ such that $\delta\leq \curv(X,d)\leq \Delta$,  is it possible to find a sequence $\{(M_i,g_i)\}_{i\in \N}$ of compact $m$-dimensional Riemannian manifolds with $\delta\leq \Sec(g_i)\leq \Delta$ that converges in the Gromov-Hausdorff sense to $(X,d)$?
\end{question}


We now briefly outline the strategy of the proofs.  To prove different parts of  \th\ref{MT: Main Theorem 1} we proceed as follows.  First, we prove the existence of the limit Ricci flow by verifying that the conditions of Hamilton's compactness theorem (\th\ref{T: Hamiltons compactness theorem}) hold. Next, we show that the limit flow converges to a metric space isometric to the original one by satisfying the conditions of Theorem~\ref{T: Existence of limit}. To further improve the convergence, we apply Theorem~\ref{T: Compactness in C alpha topology}. The uniqueness argument follows a similar approach to that of \cite[Section 5]{BurkhardtGuim2019}.  Once \th\ref{MT: Main Theorem 1} is established, we derive \th\ref{MT: Main Theorem} from it using \th\ref{T: Approximating bounded spaces}. Theorem~\ref{MT: Equivariant}  follows from \th\ref{MT: Main Theorem 1} together with results on equivariant Gromov–Hausdorff convergence. Finally, \th\ref{MC: main corollary} is a consequence of \th\ref{MT: Main Theorem 1}, \th\ref{T: Approximating bounded spaces} and \cite[Theorem~1]{BrendleSchoen2009}.


The organization of our manuscript is as follows. In Section \ref{S: Preliminaries} we present different notions of convergence, results about spaces with bounded curvature, and recall some foundational theory about convergence  of sequences of Ricci flows, and estimates on the time existence for Ricci flows. In Section \ref{S: Proof of main theorem} we present the proofs of our main results.

\section*{Acknowledgements}
We thank Christoph Böhm and Felix Schulze for discussions on the proof of uniqueness.

\section{Preliminaries}\label{S: Preliminaries}

In this section, we present the necessary preliminaries required for the proof of the main theorems, for the convenience of the reader.

\subsection{Convergence of Riemannian manifolds}

In this subsection, we recall the concepts of $C^{k,\alpha}$-convergence and Cheeger-Gromov convergence and compare them to each other.

\subsubsection{$C^{k,\alpha}$-Hölder topology and convergence}

Let  $U\subset \R^m$ be a bounded open domain, $k\in \N$, and $0<\alpha\leq1$. We define the set $C^{k,\alpha}(U,\R^n)$, referred to as the  \emph{$(k,\alpha)$-Hölder space of functions}, as the set of all functions $f\in C^k(U, \R^n)$ which have finite \emph{$(k,\alpha)$-Hölder norm} defined as (see \cite[p. 301]{Petersen2006}):
\[\left\| f\right\|_{C^{k,\alpha}(U)} = 
\left\|  f\right\|_{C^k(U)}+\sum_{\vert i\vert= k}\left \|  \partial^i f\right \|_{\alpha}, 
\]
where, 
\begin{linenomath}
\begin{align*}
	\left\| f\right\|_{C^k(U)}&:=\sup_{u\in U} \left\|f(u)\right\|+\sum_{1\leq \vert i\vert\leq k}\sup_{u\in U}\left\| {\partial^i f} (u)\right\|,\\[0.7em]
\intertext{and}	
	\left \|  \partial^i f\right \|_{\alpha}&:=\sup_{u, v\in U}\frac{\| \partial^i f(u)-\partial^i f(v)\|}{\vert u-v\vert^\alpha}.
\end{align*}
\end{linenomath}
Here we use multi-index notation to define 
\begin{linenomath}
\begin{align*}
	{\partial^i f} &= \frac{\partial^l f}{\partial (x_{1})^{i_1}\cdots\partial (x_{m})^{i_m}},
\end{align*}
\end{linenomath}
with $i=(i_1, \cdots, i_n)$, and  $\vert i\vert=i_1+\cdots+ i_n=l$. Such functions $f\in C^{k,\alpha}(U,\R^n)$ are said to be \textit{$C^{k,\alpha}$-continuous}.

Consider two compact smooth manifolds $M$ and $N$ of dimensions $m$ and $n$ respectively. We define the \emph{$C^{k,\alpha}(M,N)$ space} as the set of functions $f\in C^{k}(M,N)$ such that for any chart $\phi\colon \tilde{U}\subset M\to U\subset \R^m$, any compact subset $K\subset \tilde{U}$, and any chart $\psi\colon \tilde{V}\subset N\to V\subset\R^n$ with $f(K)\subset \tilde{V}$ the map $\psi\circ f\circ\phi^{-1}\colon U\to V\subset \R^n$ belongs to the space $C^{k,\alpha}(U,\R^n)$. 

We say that a Riemannian metric $g$ on $M$ is a \emph{$C^{k,\alpha}$-continuous Riemannian metric} if for any chart $U\subset M$, the components $g_{i,j}\colon U\to \R$ are $C^{k,\alpha}$-continuous.

Let  $M$ be a $C^{k+1}$-smooth manifold and $g$ be a $C^{k,\alpha}$-continuous Riemannian metric on $M$. Further, let  $\{g^k\}_{k\in \N}$ be a sequence of $C^{k,\alpha}$-continuous Riemannian metrics on $M$. We say that \emph{the sequence $\{g^k\}$ converges in the $C^{k,\alpha}$-topology to $g$} if for any chart  $U\subset M$, the  coordinate function $g^k_{ij}$ converges to the coordinate function $g_{ij}$ in the $C^{k,\alpha}$-topology.

A sequence $\{(M_i,g_i,p_i)\}_{i\in \N}$ of pointed complete Riemannian manifolds \emph{converges in the pointed $C^{k,\alpha}$-topology} to a pointed  Riemannian manifold $(M,g,p)$ if for every $R>0$ we can find a domain $\Omega\subset M$ containing $p$ such that $B_R(p)\subset \Omega$, and embeddings $\varphi_i\colon \Omega\to M_i$ for large $i$, such that $B_R(p_i)\subset\varphi_i(\Omega)$, $\varphi_i(p) = p_i$, and $\varphi_i^\ast g_i\to g$ on $\Omega$ in the $C^{k,\alpha}$-topology (see \cite[p. 309]{Petersen2006}).

\subsubsection{Cheeger-Gromov Convergence}

Consider $\{(M_k,g_k,p_k)\}_{k\in \N}$ a sequence of smooth complete pointed Riemannian manifolds with smooth Riemannian metrics. Given $(M,g,p)$ a smooth complete pointed Riemannian manifold with a smooth Riemannian metric, we say that \emph{the sequence $\{(M_k,g_k,p_k)\}_{k\in \N}$ converges in the Cheeger-Gromov sense to $(M,g,p)$} if:
\begin{enumerate}[(i)]
\item  There exists a sequence of domains $\Omega_k\subset M$ such that for any compact subset $K\subset M$, there exists $N\in \N$ such that for $k\geq N$ it holds $K\subset \Omega_k$. In other words the sequence $\{\Omega_k\}$ \emph{exhausts} $M$.
\item For each $k\in \N$ there exists a smooth function $\phi_k\colon \Omega_k\to M_k$ such that it is a diffeomorphism onto its image, and $\phi_k(p) = p_k$.
\item The pullback metrics $\phi_k^\ast g_k$ converge to $g$ in the $C^\infty$-topology as $k\to\infty$.
\end{enumerate}

\begin{rmk}
Observe that for $M$ compact, in the definition of Cheeger-Gromov convergence,  for sufficiently large $k\in \N$ we can assume that $\Omega_k= M$.
\end{rmk}

\begin{rmk}
The Gromov-Cheeger limits are unique in the following sense: if a sequence $\{(M_i,g_i,p_i)\}$ of smooth complete Riemannian manifolds have both $(M_1,g_1,p_1)$ and $(M_2,g_2,p_2)$ as limits, then there exists a smooth isometry $I\colon (M_1,g_1)\to (M_2,g_2)$.
\end{rmk}

\begin{rmk}\th\label{R: Cheeger Gromov convergence implies C1alpha convergence}
Observe that by definition, a sequence $\{(M_k,g_k,p_k)\}$ converging to $(M,g,p)$ in the Cheeger-Gromov sense, converges in the pointed $C^{\ell,\alpha}$-topology, for any $\ell\in \N$, and $0<\alpha\leq 1$.
\end{rmk}

\subsection{Metric spaces of bounded curvature} In this section we recall the notion of curvature bounds for metric spaces. We denote by $M^2_k$ the $2$-dimensional simply-connected surface with constant sectional curvature equal to $k\in \R$. Let $(X,d_X)$ be a length space, and $x\in X$  a fixed point. Consider $y,z\in X$ close to $x$, and denote by $\triangle(x,y,z)$ the triangle with geodesic sides and vertices $x,y,z$. We say that \emph{$X$ has curvature bounded below by $k\in \R$ at $x$, (or bounded above)} if the following holds: given a geodesic triangle $\widetilde{\triangle}(\tilde{x},\tilde{y},\tilde{z})\subset M^2_k$ with sides of length equal to the ones of $\triangle(x,y,z)$, and any point $w$ on the edge $yz$ joining $y$ to $z$ in $\triangle(x,y,z)$, and $\tilde{w}$ the corresponding point in $\widetilde{\triangle}(\tilde{x},\tilde{y},\tilde{z})$ we have
\[
	d_X(x,w)\geq d_{M^2_k}(\tilde{x},\tilde{w}) \quad \mbox{(or }d_X(x,w)\leq d_{M^2_k}(\tilde{x},\tilde{w})\mbox{)}.
\]   
When $X$ has curvature bounded below (above) by $k$ at every point, we say that \emph{$X$ has curvature bounded below (above) by $k$}, and denote it by $\curv(X) \geq k$ (or $\curv(X)\leq k$).

The following theorem tells us that length spaces with both an upper and lower curvature bound are smooth manifolds, and their geometry is determined by a $C^1$ regular Riemannian metric.

\begin{theorem}\cite[Theorem 15.1 and Remarks right after] {BerestovskijNikolaev1993}\th\label{T: Approximating bounded spaces}
Consider a compact length space $(X,d_X)$ with
\[
k\leq \curv(X,d_X)\leq K,
\]
and  any $\varepsilon>0$. Then $X$ is a smooth manifold, and there exists an $0<\alpha<1$ and a sequence of  smooth Riemannian metrics $\{g^\varepsilon_i\}_{i\in \N}$ on $X$ with
\[
	k-\varepsilon \leq \mathrm{Sec}(g^\varepsilon_i) \leq K+\varepsilon,
\]
which $C^{1,\alpha}$-converges to a $C^{1,\alpha}$-Riemannian metric $g^\varepsilon$ on $X$, with $d_{g^\varepsilon}=d_X$. Moreover the distance functions $d_{g_i}$ converge uniformly to the distance $d_X$.
\end{theorem}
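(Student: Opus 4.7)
The plan is to establish in sequence: (i) that $X$ is a topological manifold, (ii) that $d_X$ is realized by a $C^{1,\alpha}$-continuous Riemannian metric, and (iii) that this metric admits the claimed smooth approximations with nearly the same two-sided curvature bounds.

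For (i), I would show that at each point $x \in X$ the tangent cone $T_xX$ inherits an Alexandrov lower curvature bound and a CAT upper curvature bound. After blowing up, both bounds rescale so that the cone satisfies $\curv \geq 0$ and $\curv \leq 0$ simultaneously, which forces $T_xX$ to be flat and therefore isometric to some $\R^m$ with $m = \dim X$. Euclidean tangent cones at every point, together with the local uniqueness of geodesics guaranteed by the CAT($K$) condition, yield a topological manifold structure on $X$; bi-Lipschitz charts are produced via distance functions from $m$ sufficiently generic basepoints.

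For (ii), I would use that under a two-sided curvature bound, on a small ball $B(x,r)$ the squared distance $d_X(x,\cdot)^2$ is both semiconvex and semiconcave and hence $C^{1,1}$, giving a $C^{1,1}$-compatible atlas in which the components of the metric $g$ are initially only bounded and measurable. To upgrade regularity, I would pass to \emph{harmonic coordinates}: solve an intrinsic Dirichlet problem for the coordinate functions $u^i$ using the Dirichlet energy determined by the length structure, then apply elliptic regularity together with the Bochner--Weitzenböck identity (interpreted weakly) and the two-sided curvature bound to conclude that the metric coefficients $g_{ij}$ lie in $C^{1,\alpha}$.

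For (iii), I would apply a mollification in harmonic coordinates: cover $X$ by finitely many such charts, convolve the metric components with a smooth kernel of decreasing bandwidth, and glue via a subordinate partition of unity. In harmonic coordinates the sectional curvature is a quasi-linear expression in second derivatives of $g_{ij}$, so since the original coefficients lie in $C^{1,\alpha}$, the commutator between mollification and the curvature operator tends to zero with the bandwidth. This would give sequences of smooth metrics $g_i^\varepsilon$ satisfying $\Sec(g_i^\varepsilon) \in [k-\varepsilon, K+\varepsilon]$ and converging to a limit $g^\varepsilon$ in $C^{1,\alpha}$; uniform convergence of $d_{g_i^\varepsilon}$ to $d_X$ on the compact space then follows via Arzelà--Ascoli. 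The main obstacle is the preservation of the two-sided curvature bound under smoothing in step (iii): since sectional curvature depends on the second derivatives of $g$, which a priori only lie in $L^\infty$, ordinary convolution cannot be expected to preserve pointwise curvature bounds. Overcoming this requires the specific choice of harmonic coordinates together with delicate control of the error terms introduced by the partition of unity, and constitutes the technical heart of the Berestovskii--Nikolaev result; one must resist the temptation to use a short-time Ricci flow smoothing here, as that is precisely the circularity the present paper relies on avoiding.
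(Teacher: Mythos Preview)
The paper does not prove this statement at all: it is quoted verbatim as \cite[Theorem 15.1 and Remarks right after]{BerestovskijNikolaev1993} and used as a black box throughout (see the proofs of \th\ref{MT: Main Theorem}, \th\ref{MT: Equivariant}, and \th\ref{MC: main corollary}). So there is no ``paper's own proof'' to compare against; your sketch is an attempt to outline the Berestovski\u{\i}--Nikolaev argument itself, which the present paper deliberately imports rather than reproves.

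As an outline of that external result, your three-step plan is broadly faithful. Steps (i) and (ii) are essentially the content of the Nikolaev regularity theorem: Euclidean tangent cones from the two-sided bound, and $C^{1,\alpha}$ regularity of the metric in harmonic coordinates. For step (iii), however, the smoothing that actually appears in Nikolaev's work is not a chart-by-chart convolution patched with a partition of unity; it is an \emph{intrinsic} averaging of the metric tensor over small geodesic balls via parallel transport (a de~Rham-type mollifier), precisely because the partition-of-unity errors you flag would otherwise destroy the pointwise curvature control. Your proposal correctly identifies this as the technical heart of the matter and correctly warns against the circularity of using Ricci-flow smoothing, but the specific mechanism you describe (mollify coefficients in harmonic charts, then glue) is not the one that is known to work, and you do not indicate how the second-derivative error terms from the cutoff functions would be absorbed. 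If you intend to actually supply a proof rather than cite one, that is the gap you would need to close.
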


\begin{rmk}\th\label{R: Approximation for spaces of bounded curvature is also Gromov-Hausdorff}
When we consider a sequence $\{(M,g_i)\}_{i\in \N}$ of manifolds given by \th\ref{T: Approximating bounded spaces}, as  metric spaces $(M,d_{g_i})$, these converge in the Gromov-Hausdorff sense to $(X,d_g)$ since the $C^{1,\alpha}$-convergence of the Riemannian metric implies a $C^1$-convergence, and this in turn implies the convergence in the Gromov-Hausdorff sense. 
\end{rmk}

In what follows, we state several lemmata that are likely known to the experts, although we could not find explicit proofs for them. Therefore, for the sake of completeness we provide proofs for each one.

First, we show that the upper bound on the sectional curvature of a compact manifold imposes restrictions on the injectivity radius of the manifold.  Recall that a \emph{geodesic loop}  $\gamma\colon [0,1]\to M$  is a smooth minimizing geodesic with  $\gamma(0)=\gamma(1)$ and $\gamma'(0)=-\gamma'(1)$.

\begin{lemma}\th\label{L: injectivity radius estimate}
Let $(M,g)$ be a compact Riemannian manifold such that there exists $\Delta>0$ constant with $\Sec(M)\leq \Delta$. Then we have that either $\Inj(M)\geq \pi/\sqrt{\Delta}$, or 
\[
\Inj(M) = \frac{1}{2}\big(\mbox{length of shortest geodesic loop in }M\big).
\]
\end{lemma}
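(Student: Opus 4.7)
My plan is to reduce to the classical pointwise description of the injectivity radius due to Klingenberg: at every point $p \in M$, one has
\begin{equation*}
\mathrm{inj}(M,p) \;=\; \min\Bigl\{c(p),\,\tfrac{1}{2}\ell(p)\Bigr\},
\end{equation*}
where $c(p)$ denotes the conjugate radius at $p$ and $\ell(p)$ denotes the infimum of lengths of geodesic loops based at $p$ (with the convention $\ell(p)=+\infty$ if no such loop exists). This is standard: along a unit-speed geodesic $\gamma$ leaving $p$, the minimal cut time is either a conjugate time (yielding $c(p)$) or a time at which two distinct minimizing geodesics from $p$ arrive at $\gamma(t)$; in the latter case, concatenating them produces a geodesic loop at $p$ of length $2t$, which shows $\mathrm{inj}(M,p) \geq \tfrac{1}{2}\ell(p)$, and conversely any loop gives a cut time bounded by half its length.

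First, I would observe that the function $p \mapsto \mathrm{inj}(M,p)$ is continuous on the compact manifold $M$, so it achieves its minimum at some $p_0 \in M$, and therefore $\Inj(M) = \mathrm{inj}(M,p_0)$. Next, by the Rauch comparison theorem, the upper sectional curvature bound $\Sec(M) \leq \Delta$ yields the lower bound $c(p) \geq \pi/\sqrt{\Delta}$ at every $p \in M$. Applying the pointwise identity at $p_0$ gives
\begin{equation*}
\Inj(M) \;=\; \min\Bigl\{c(p_0),\,\tfrac{1}{2}\ell(p_0)\Bigr\}.
\end{equation*}
If $\Inj(M) = c(p_0)$, then $\Inj(M) \geq \pi/\sqrt{\Delta}$ and we are in the first alternative. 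Otherwise, $\Inj(M) = \tfrac{1}{2}\ell(p_0) < \pi/\sqrt{\Delta}$, and in particular there exists a geodesic loop at $p_0$, so the set of geodesic loops in $M$ is nonempty.

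It remains to identify $\tfrac{1}{2}\ell(p_0)$ with half the length of a shortest geodesic loop in $M$. On the one hand, for every $p \in M$ admitting a geodesic loop one has $\mathrm{inj}(M,p)\leq \tfrac{1}{2}\ell(p)$, hence $\Inj(M) \leq \tfrac{1}{2}\inf_p \ell(p)$. On the other hand, in the present case $\Inj(M) = \tfrac{1}{2}\ell(p_0) \geq \tfrac{1}{2}\inf_p \ell(p)$. Thus $\Inj(M) = \tfrac{1}{2}\inf_p \ell(p)$; finally, a standard compactness argument (taking a minimizing sequence of geodesic loops, extracting a convergent subsequence of basepoints and initial velocities using compactness of $M$ and of the unit tangent bundle, and using smooth dependence of geodesics on initial data) shows that the infimum is attained, so it equals the length of a shortest geodesic loop.

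The main obstacle here is really just the invocation of the pointwise Klingenberg identity, which is a folkore result but requires a clean statement adapted to geodesic \emph{loops} (rather than closed geodesics). I would either cite it directly from a standard reference (e.g., do Carmo or Cheeger--Ebin) or include a brief argument along the lines sketched above, being careful that the ``two minimizing geodesics'' at a first cut point indeed concatenate smoothly to produce a geodesic loop (which is precisely the content of Klingenberg's lemma on cut points).
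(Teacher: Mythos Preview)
Your argument is correct and essentially the same as the paper's: both pick a point $p_0$ minimizing $\Inj$, use the Rauch comparison bound on the conjugate radius together with Klingenberg's lemma at the nearest cut point to produce a geodesic loop of length $2\Inj(M)$ when $\Inj(M)<\pi/\sqrt{\Delta}$, and then verify that no shorter loop can exist. One small remark: your final compactness step is unnecessary, since the Klingenberg loop at $p_0$ already realizes the infimum (you have $\Inj(M)=\tfrac12\ell(p_0)$ and $\Inj(M)\le \tfrac12\ell(p)$ for every $p$, so the loop of length $\ell(p_0)$ witnessed by Klingenberg's lemma is itself a shortest loop).
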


\begin{proof}
Let $p\in M$ be such that $\Inj(p) = \Inj(M)$. We observe that by definition $\Inj(p)= d(p,C(p))$, where $C(p)$ is the cut locus of $p$.  Let $q\in C(p)$ be such that $d(p,q) = d(p,C(p))$. 

We assume now that $\Inj(M)<\pi/\sqrt{\Delta}$, and consider $c\colon[0,1]\to M$ any minimizing geodesic from $p$ to $q$. Given our assumption, by \cite[Theorem 2.6.2 (i)]{Klingenberg} we have that there are no conjugate points to $p$ on $c$. Then by \cite[Lemma 2.1.11 (iii)]{Klingenberg} there exists exactly one minimizing geodesic $c_1\colon [0,1]\to M$ from $p$ to $q$ not equal to $c$, such that $c'(1)=-(c_1)'(1)$ and $c'(0) =-(c_1)'(0)$. We consider the geodesic loop $\gamma$ given by concatenating $c$ and $c_1$. We observe that $\gamma$ has length $2\Inj(M)$ as desired.

Assume now that there exists another geodesic loop $\tilde{\gamma}$ with a shorter length, i.e. $l(\tilde{\gamma})<l(\gamma)$. Consider $p'$ and $q'$ in $\tilde{\gamma}$ such that $d(p',q')= l(\tilde{\gamma})/2$. Then we observe that $q'\in C(p')$. Thus, $$\Inj(p')=d(C(p'), p')\leq d(p', q')=l(\tilde{\gamma})/2<l(\gamma)/2 = \Inj(M),$$ which yields a contradiction, hence the result. 

\end{proof}

The next lemma provides a uniform lower bound on the volume of unit balls in a non-collapsing sequence of Riemannian manifolds with a uniform lower bound on sectional curvature.

\begin{lemma}\th\label{L: Non-collapsing dimension and lower curvature bound implies lower bound on volume of unit balls}
Consider  a sequence  $(M_i,g_i)$ of compact  smooth $n$-dimensional Riemannian manifolds with $\Sec\geq \delta$. Suppose that there exists a compact Alexandrov space  $(X,d)$ of dimension $n$ such that $(M_i,d_{g_i})$ converges in the Gromov-Hausdorff sense to $(X,d)$. Then there exists a constant $\nu_0>0$ such that, for sufficiently large $i$, the volume of the unit balls $B_{1}(p_i)\subset(M_i,g_i)$ are uniformly bounded below by $\nu_0$. That is there exists $N\in \N$ such that for all $p_i\in M_i$ and $i\geq N$ we have
\[
\mathrm{vol}(g_i)(B_{1}(p_i))\geq \nu_0>0.
\]
\end{lemma}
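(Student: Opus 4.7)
The strategy is to combine a Bishop--Gromov-type relative volume comparison on each $(M_i,g_i)$, which localizes volume into small balls, with a non-collapsing volume convergence theorem that controls the total volume in the limit.

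First, I would exploit the lower sectional curvature bound. Since $\Sec(g_i)\geq \delta$ implies $\Ric(g_i)\geq (n-1)\delta$, the classical Bishop--Gromov relative volume comparison yields, for any $p_i\in M_i$ and any $R\geq 1$,
\[
\frac{\mathrm{vol}(B_{1}(p_i))}{v_\delta(1)} \;\geq\; \frac{\mathrm{vol}(B_R(p_i))}{v_\delta(R)},
\]
where $v_\delta(r)$ denotes the volume of an $r$-ball in the $n$-dimensional simply-connected space form of constant curvature $\delta$. Taking $R=D_i:=\diam(M_i,g_i)$ (so that $B_{R}(p_i)=M_i$) gives the key inequality
\[
\mathrm{vol}(B_{1}(p_i)) \;\geq\; \mathrm{vol}(M_i,g_i)\cdot \frac{v_\delta(1)}{v_\delta(D_i)}.
\]
Since $(M_i,d_{g_i})\to (X,d)$ in the Gromov--Hausdorff sense and $X$ is compact, $D_i\to \diam(X)=:D<\infty$, so $v_\delta(D_i)$ stays uniformly bounded.

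Second, I would invoke volume convergence for non-collapsed Gromov--Hausdorff limits to control the numerator. By Colding's volume convergence theorem for sequences of Riemannian manifolds with a uniform lower Ricci bound, the hypothesis that the limit $X$ has Hausdorff dimension $n$ (i.e., $\dime X=n$ in the Alexandrov sense) forces
\[
\mathrm{vol}(M_i,g_i)\longrightarrow \mathcal{H}^n(X) \quad \text{as } i\to\infty.
\]
Combined with the non-degeneracy $\mathcal{H}^n(X)>0$, which is a direct consequence of $X$ being an $n$-dimensional Alexandrov space with a lower curvature bound (apply Bishop--Gromov directly on $X$ to any sufficiently small ball), the two ingredients yield, for all sufficiently large $i$ and uniformly in $p_i\in M_i$,
\[
\mathrm{vol}(B_{1}(p_i)) \;\geq\; \nu_0 \;:=\; \tfrac{1}{2}\,\mathcal{H}^n(X)\cdot \frac{v_\delta(1)}{v_\delta(D+1)},
\]
once $i$ is large enough to guarantee both $D_i\leq D+1$ and $\mathrm{vol}(M_i,g_i)\geq \tfrac12\mathcal{H}^n(X)$.

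The main delicate point is invoking the correct form of volume convergence. Colding's theorem applies directly here since we have a uniform lower Ricci bound and a non-collapsed limit of the same Hausdorff dimension. Alternatively, because the whole sequence lies in the Alexandrov category (uniform lower sectional bound $\delta$), one can instead appeal to the Burago--Gromov--Perelman continuity of the $n$-dimensional Hausdorff measure under non-collapsed Gromov--Hausdorff convergence with a uniform lower curvature bound. Either route delivers $\mathrm{vol}(M_i,g_i)\to \mathcal{H}^n(X)$, after which the remainder of the argument is simply Bishop--Gromov.
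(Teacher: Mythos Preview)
Your proposal is correct and follows essentially the same approach as the paper: Bishop--Gromov relative volume comparison at $R=\diam(M_i)$, combined with diameter convergence under Gromov--Hausdorff convergence and the Burago--Gromov--Perelman volume convergence $\mathrm{vol}(M_i,g_i)\to\mathcal{H}^n(X)>0$. The paper invokes \cite[Theorem~10.8]{BuragoGromovPerelman1992} directly (your second alternative) rather than Colding's theorem, but the structure of the argument is identical.
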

\begin{proof}
We recall (see \cite[Proposition 12.6]{Taylor}) that the Hausdorff $n$-measure $\mathcal{H}^n(M_i,d_{g_i})$ of\linebreak$(M_i,d_{g_i})$ coincides with the volume measure of $(M_i,g_i)$, that is
\[
\mathcal{H}^n(M_i,d_{g_i}) = \mathrm{vol}(M_i,g_i):=\int_M \,d\mathrm{vol}(g_i).
\]
Moreover, since every space $(M_i,d_{g_i})$ and $(X,d)$ are compact $n$-dimensional Alexandrov spaces with $\curv\geq \delta$, and $(M_i,d_{g_i})$ converges in the Gromov-Hausdorff sense to $(X,d)$, then by \cite[Theorem 10.8]{BuragoGromovPerelman1992} we have that
\[
\mathcal{H}^n(M_i,d_{g_i})\underset{i\to\infty}{\longrightarrow} \mathcal{H}^n(X,d)>0.
\]
Thus, we conclude that there exist $\nu>0$ such that for all  $i\in \N$ we have $\mathrm{vol}(M_i,g_i)\geq \nu$.

Let us denote by $\mathcal{M}^n_\delta$ the complete simply-connected $n$-dimensional Riemannian manifold with constant sectional curvature equal to $\delta$. Then since $\Sec(g_i)\geq \delta$, by the Bishop-Gromov Comparison Theorem (see \cite[Section 9.1.3 Lemma 1.6]{Petersen}) we have that
\[
\frac{\mathrm{vol}(g_i)(B_1(p_i))}{\mathrm{vol}(\mathcal{M}^n_\delta)(B_1(\bar{p}_i))}\geq \frac{\mathrm{vol}(g_i)(B_{D_i}(p_i))}{\mathrm{vol}(\mathcal{M}^n_\delta)(B_{D_i}(\bar{p}_i))} = \frac{\mathrm{vol}(M,g_i)}{\mathrm{vol}(\mathcal{M}^n_\delta)(B_{D_i}(\bar{p}_i))}\geq \frac{\nu}{\mathrm{vol}(\mathcal{M}^n_\delta)(B_{D_i}(\bar{p}_i))},
\]
where $D_i = \mathrm{diam}(M_i,g_i)$, and $\mathrm{vol}(\mathcal{M}^n_\delta)(B_{R}(\bar{p}_i))$ denotes the volume in $\mathcal{M}^n_\delta$ of the ball of radius $R$, $B_{R}(\bar{p}_i)\subset \mathcal{M}^n_\delta$ centered at a comparison point $\bar{p}_i\in \mathcal{M}^n_\delta$.

Since $X$ is compact and $(M_i,d_{g_i})$ converges to $(X,d)$ in the Gromov-Hausdorff sense, then by \cite[Exercise 7.3.14]{BuragoBuragoIvanov} we have that 
\[
D_i\underset{i\to\infty}{\longrightarrow} D= \mathrm{diam}(X,d).
\]
Thus for  $\epsilon'>0$ fixed, there exists $N\in \N$ such that $D_i\in (D-\epsilon',D+\epsilon')$. This implies that 
\[
\mathrm{vol}(\mathcal{M}^n_\delta)(B_{D_i}(\bar{p}_i))\leq \mathrm{vol}(\mathcal{M}^n_\delta)(B_{D+\epsilon'}(\bar{p}_i)).
\]
Thus we conclude that for $i\geq N$ we have
\[
\mathrm{vol}(g_i)(B_1(p_i))\geq \frac{\nu \mathrm{vol}(\mathcal{M}^n_\delta)(B_1(\bar{p}_i))}{\mathrm{vol}(\mathcal{M}^n_\delta)(B_{D+\epsilon'}(\bar{p}_i))} := \nu_0>0
\]
We observe that $\nu_0$ does not depend on choice of a comparison point $\bar{p}_i\in \mathcal{M}^n_\delta$. This is because the space $\mathcal{M}^n_\delta$  is homogeneous, and thus the volumes of $B_1(\bar{p}_i),\: B_{D+\epsilon'}(\bar{p}_i)\subset \mathcal{M}^n_\delta$ do not depend on the choice of the base point. Thus $\nu_0$ is a constant.
\end{proof}

\begin{lemma}\th\label{L: Bounded curvature and limit n-dimensional implies bound injectivity radius}
Consider a sequence $(M_i,g_i)$  of compact smooth $n$-dimensional Riemannian manifolds with $\delta\leq  \Sec\leq \Delta$. Suppose that there exists a compact Alexandrov space $(X,d)$  of dimension $n$ such that $(M,d_{g_i})$ converges  to $(X,d)$ in the Gromov-Hausdorff sense. Then there exists a constant $i_0>0$ such that for all $i\in \N$ we have
\[
\Inj(g_i)\geq i_0.
\]
\end{lemma}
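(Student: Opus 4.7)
The plan is a proof by contradiction that combines the dichotomy from \th\ref{L: injectivity radius estimate} with the non-collapsing volume estimate of \th\ref{L: Non-collapsing dimension and lower curvature bound implies lower bound on volume of unit balls}, closing the argument via Cheeger's classical injectivity radius lemma.

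First, I apply \th\ref{L: injectivity radius estimate} to each $(M_i,g_i)$: since $\Sec(g_i) \leq \Delta$, either $\Inj(g_i) \geq \pi/\sqrt{\Delta}$, or $\Inj(g_i) = L_i/2$, where $L_i$ is the length of the shortest geodesic loop in $M_i$. If the first alternative holds for all but finitely many indices, then $i_0 := \pi/\sqrt{\Delta}$ already provides the desired uniform lower bound, so after passing to a subsequence I may assume the second alternative holds for every $i$. Since $M_i$ is compact, I may choose $p_i \in M_i$ realizing $\Inj(p_i, g_i) = \Inj(g_i)$; following the proof of \th\ref{L: injectivity radius estimate}, such a $p_i$ is the basepoint of a shortest geodesic loop of length $L_i = 2\Inj(g_i)$.

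Next, assuming towards a contradiction that $\Inj(g_i) \to 0$, I invoke \th\ref{L: Non-collapsing dimension and lower curvature bound implies lower bound on volume of unit balls} to produce $\nu_0 > 0$ and $N \in \N$ such that $\mathrm{vol}(g_i)(B_1(p_i)) \geq \nu_0$ for all $i \geq N$; the hypothesis that $\dim X = n$ is essential here in order to rule out collapse. I then apply Cheeger's injectivity radius lemma: for an $n$-dimensional complete Riemannian manifold with two-sided sectional curvature bound $|\Sec| \leq \Lambda$ and $\mathrm{vol}(B_1(p)) \geq v > 0$, one has $\Inj(p) \geq \iota(n,\Lambda,v) > 0$ for a universal positive function $\iota$. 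Taking $\Lambda := \max(|\delta|,\Delta)$ and $v := \nu_0$ yields $\Inj(g_i) = \Inj(p_i,g_i) \geq \iota(n,\Lambda,\nu_0)$ for all $i \geq N$, contradicting $\Inj(g_i) \to 0$ and completing the proof with $i_0 := \min\{\pi/\sqrt{\Delta},\,\iota(n,\Lambda,\nu_0)\}$.

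The main obstacle is the joint use of both curvature bounds: the upper bound $\Delta$ is needed both for the dichotomy in \th\ref{L: injectivity radius estimate} and for the hypothesis of Cheeger's lemma, while the lower bound $\delta$ is essential for the Bishop-Gromov step underpinning \th\ref{L: Non-collapsing dimension and lower curvature bound implies lower bound on volume of unit balls}. One could alternatively attempt a more hands-on argument, bounding $L_i$ from below by comparing the volume of a small tubular neighborhood of the short loop against a Bishop-Gromov volume estimate at $p_i$; however, this essentially reproves Cheeger's lemma, so invoking it directly is the cleanest route.
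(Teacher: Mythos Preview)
Your argument is correct and its core coincides with the paper's proof: invoke \th\ref{L: Non-collapsing dimension and lower curvature bound implies lower bound on volume of unit balls} for a uniform lower volume bound on unit balls, then apply Cheeger's injectivity radius estimate (cited in the paper as \cite[Chapter~10, Section~4, Lemma~51]{Petersen2006}) to conclude. The detour through the dichotomy of \th\ref{L: injectivity radius estimate} and the contradiction framing are superfluous---the paper proceeds directly, obtaining $\Inj(g_i)\geq i_1$ for all $i\geq N$ and then setting $i_0=\min\{\Inj(g_1),\ldots,\Inj(g_{N-1}),i_1\}$.
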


\begin{proof}
By \th\ref{L: Non-collapsing dimension and lower curvature bound implies lower bound on volume of unit balls} we have that there exists a positive constant $\nu_0>0$ and $N\in\N$ such that for $i\geq N$, we have for all $p_i\in M_i$
\[
\mathrm{vol}(g_i)(B_1(p_i))\geq \nu_0.
\]
Thus for the sequence $\{(M_i,g_i)\}_{i\geq N}$, by \cite[Chapter~10, Section~4, Lemma~51]{Petersen2006} there exists a constant $i_1>0$ which depends only on $n$, $\max\{|\delta|, |\Delta|\}$ and $\nu_0$, such that for all $i\geq N$
\[
\Inj(g_i)\geq i_1. 
\]
We now  take $i_0 = \min\{\Inj(g_1),\ldots, \Inj(g_{N-1}), i_1\}>0$, to get the desired conclusion.
\end{proof}

With this, we obtain the following conclusion:

\begin{lemma}\th\label{L: Upper and lower curvature bounds imply that injectivity radius is positive}
Consider a compact complete length space $(X,d)$ with
\[
k\leq \curv(X)\leq K,
\]
and  let $\varepsilon>0$. Then for the sequence $\{g^\varepsilon_i\}_{i\in \N}$ converging to the $C^{1,\alpha}$-continuous Riemannian metric $g^\varepsilon$  on $X$ given by \th\ref{T: Approximating bounded spaces},  there exists a constant $\nu_0>0$ such that $\Inj(g^\varepsilon_i)\geq \nu_0$ for all $i$. 
\end{lemma}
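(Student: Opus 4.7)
The plan is to show this lemma follows essentially as a direct corollary of \th\ref{L: Bounded curvature and limit n-dimensional implies bound injectivity radius}, after verifying that all its hypotheses are satisfied by the approximating sequence $\{(X, g^\varepsilon_i)\}_{i\in \N}$ produced by \th\ref{T: Approximating bounded spaces}.

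First, I would invoke \th\ref{T: Approximating bounded spaces} to obtain the sequence of smooth Riemannian metrics $\{g^\varepsilon_i\}_{i\in\N}$ on $X$, where $X$ is a smooth manifold of some dimension $n$. Set $\delta := k-\varepsilon$ and $\Delta := K+\varepsilon$. The theorem guarantees that each $(X,g^\varepsilon_i)$ is a smooth compact $n$-dimensional Riemannian manifold with $\delta \leq \Sec(g^\varepsilon_i) \leq \Delta$, so the two-sided sectional-curvature hypothesis of \th\ref{L: Bounded curvature and limit n-dimensional implies bound injectivity radius} is met.

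Next, I would verify that $(X,d)$ is a compact $n$-dimensional Alexandrov space and that $(X,d_{g^\varepsilon_i}) \to (X,d)$ in the Gromov--Hausdorff sense. The lower curvature bound $\curv(X,d) \geq k$ makes $(X,d)$ a (compact) Alexandrov space with curvature bounded below by $k$; since $X$ is the smooth $n$-manifold provided by \th\ref{T: Approximating bounded spaces}, its topological (hence Hausdorff) dimension equals $n$, matching the dimension of the approximating manifolds. The Gromov--Hausdorff convergence $(X, d_{g^\varepsilon_i}) \to (X, d)$ is furnished by \th\ref{R: Approximation for spaces of bounded curvature is also Gromov-Hausdorff}, since the $C^{1,\alpha}$-convergence of $g^\varepsilon_i$ to $g^\varepsilon$ (with $d_{g^\varepsilon} = d$) implies $C^1$-convergence and therefore uniform convergence of the induced distance functions.

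Finally, I would apply \th\ref{L: Bounded curvature and limit n-dimensional implies bound injectivity radius} to the sequence $\{(X, g^\varepsilon_i)\}_{i\in \N}$ with these values of $\delta$ and $\Delta$ and with limit space $(X,d)$, to extract a uniform lower bound $\nu_0 > 0$ with $\Inj(g^\varepsilon_i) \geq \nu_0$ for all $i$. I do not foresee a serious obstacle here; the only subtlety is bookkeeping the dimensions and curvature constants correctly and confirming that the Gromov--Hausdorff convergence provided by \th\ref{R: Approximation for spaces of bounded curvature is also Gromov-Hausdorff} is exactly of the form required by \th\ref{L: Bounded curvature and limit n-dimensional implies bound injectivity radius}.
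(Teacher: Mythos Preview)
Your proposal is correct and follows essentially the same approach as the paper's proof: establish Gromov--Hausdorff convergence of $(X,d_{g^\varepsilon_i})$ to $(X,d)$ via \th\ref{T: Approximating bounded spaces} (the paper cites this directly, you route through \th\ref{R: Approximation for spaces of bounded curvature is also Gromov-Hausdorff}), then apply \th\ref{L: Bounded curvature and limit n-dimensional implies bound injectivity radius}. Your version simply spells out the hypothesis-checking in more detail than the paper's two-line proof.
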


\begin{proof}
It follows from  \th\ref{T: Approximating bounded spaces}  that $(X,d_{g^\varepsilon_i})$ converges to $(X, d)$ in the Gromov-Hausdorff sense. The conclusion then follows from  \th\ref{L: Bounded curvature and limit n-dimensional implies bound injectivity radius}.
\end{proof}

The following result states that a non-collapsing sequence of compact manifolds with uniform upper and lower bounds on sectional curvature, as well as a uniform bound on diameter, has a convergent subsequence in the $C^{1,\alpha}$-sense.

\begin{theorem}  \cite[Remark 1.4 and Theorem 1.7]{Peters87} (see also \cite[Theorem and Corollary on p. 121]{GW88})\th\label{T: Compactness in C alpha topology}
Let $0<\alpha<1$. Then any sequence $\{(M_i,g_i)\}$ of compact $n$-dimensional manifolds with $\diam(g_i)\leq D$ for $D>0$, $\Inj(g_i)\geq \nu_0>0$, and $|\Sec(g_i)|\leq K$ has a convergent subsequence converging in the $C^{1,\alpha}$-sense to a smooth $n$-dimensional manifold $M$ with $g$ a Riemannian metric of class $C^{1,\alpha}$.
\end{theorem}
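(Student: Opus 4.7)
The plan is to reduce the problem to a local analytic compactness statement via harmonic coordinates, and then patch the chart-wise limits into a global smooth manifold equipped with a $C^{1,\alpha}$-metric. First, I invoke the Jost--Karcher (or Anderson) harmonic radius estimates: from $|\Sec(g_i)| \leq K$ and $\Inj(g_i) \geq \nu_0$ one obtains a uniform lower bound $r_H = r_H(n, K, \nu_0) > 0$ on the harmonic radius such that every geodesic ball $B_{r_H}(p) \subset M_i$ admits a harmonic coordinate chart $\phi_{i,p} \colon B_{r_H}(p) \to U \subset \R^n$ in which the metric components $(g_i)_{jk}$ are uniformly comparable to the Euclidean ones.

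The key analytic input is that in harmonic coordinates the Ricci tensor is quasilinear elliptic in $g$,
\[
\Ric_{jk} = -\tfrac{1}{2}\, g^{lm}\, \partial_l \partial_m g_{jk} + Q(g, \partial g),
\]
so once one has a uniform $C^0$ bound on $g$ and on its inverse, the curvature bound $|\Sec(g_i)| \leq K$ (which forces $|\Ric(g_i)| \leq (n-1)K$) turns this into an elliptic equation with bounded right-hand side satisfied by the $(g_i)_{jk}$. Interior Schauder estimates on slightly shrunken balls then yield uniform $C^{1,\alpha}$-bounds on the $(g_i)_{jk}$ on $B_{r_H/2}(p)$, for every $0 < \alpha < 1$.

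Next, the diameter bound $\diam(g_i) \leq D$, together with a uniform lower bound on the volume of unit balls (obtained, for instance, from \th\ref{L: Non-collapsing dimension and lower curvature bound implies lower bound on volume of unit balls} applied with the lower sectional bound $-K$), allows one to cover each $M_i$ by a uniformly bounded number $N = N(n, K, D, \nu_0)$ of harmonic charts of radius $r_H/2$. The transition functions $\phi_{i,p} \circ \phi_{i,q}^{-1}$ between overlapping harmonic charts are themselves harmonic maps between Euclidean domains carrying $C^{1,\alpha}$-bounded metrics, so elliptic regularity promotes them to uniformly $C^{2,\alpha}$-bounded diffeomorphisms. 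Gromov precompactness produces a Gromov--Hausdorff convergent subsequence $(M_i, d_{g_i}) \to (X, d)$; after passing to a further subsequence, Arzel\`a--Ascoli applied chart-by-chart extracts limit metric coefficients $g_{jk}$ of class $C^{1,\alpha}$ and limit transition maps $\psi_{pq}$ of class $C^{2,\alpha}$.

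The delicate step, and what I expect to be the main obstacle, is to assemble these limit data into a coherent global structure. One must choose $\varepsilon_i$-nets $\{p^i_1, \ldots, p^i_N\} \subset M_i$ along the subsequence so that, under the approximating Gromov--Hausdorff maps, the combinatorial pattern of overlaps of the charts $\phi_{i, p^i_k}$ stabilises. The limit atlas $\{\psi_{pq}\}$ then endows the underlying set $X$ with the structure of a smooth $n$-manifold $M$ (a $C^\infty$-compatible structure is obtained by Whitney smoothing of the $C^{2,\alpha}$-transitions), on which the limit coefficients $g_{jk}$ patch into a globally defined $C^{1,\alpha}$-Riemannian metric $g$. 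The diffeomorphisms $\varphi_i \colon \Omega_i \to M_i$ required by the definition of $C^{1,\alpha}$-convergence are obtained by composing the charts of $M$ with the harmonic charts $\phi_{i, p^i_k}$ of $M_i$, and the convergence $\varphi_i^* g_i \to g$ in $C^{1,\alpha}$ follows by construction from the chart-wise Arzel\`a--Ascoli step above.
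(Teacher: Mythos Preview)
The paper does not give its own proof of this statement: it is quoted verbatim as a result from the literature (Peters~\cite{Peters87} and Greene--Wu~\cite{GW88}) and used as a black box in the proof of \th\ref{T: Machinery}. So there is nothing in the paper to compare your argument against.

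That said, your outline is essentially the standard proof one finds in those references (and in Anderson's later refinements): uniform harmonic radius from the curvature and injectivity bounds, elliptic estimates in harmonic coordinates to get uniform $C^{1,\alpha}$ control on the metric coefficients, $C^{2,\alpha}$ control on the transitions, a finite atlas from the diameter bound, and then a diagonal Arzel\`a--Ascoli plus patching argument. One minor remark: your appeal to \th\ref{L: Non-collapsing dimension and lower curvature bound implies lower bound on volume of unit balls} for the volume lower bound is circuitous, since that lemma presupposes a Gromov--Hausdorff limit of the right dimension; here one gets the uniform volume lower bound on balls of radius $\leq \nu_0/2$ directly from the injectivity radius bound via Croke's inequality or the G\"unther--Bishop comparison (upper sectional bound), without reference to any limit.
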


\subsection{Limits of sequences of Ricci flows}

Here we recall some convergence results for sequences of Ricci flows under some geometric conditions.

Let $(M_i,g_i(t))$ be a sequence of smooth families of complete Riemannian manifolds for real $t\in(a,b)$ where $-\infty \leq a <0<b\leq\infty$, and suppose $p_i\in M_i$ for each $i$. Let $(M,g(t))$ be another smooth family of Riemannian manifolds for $t\in (a,b)$, and take $p\in M$. We say that \emph{$(M_i,g_i(t),p_i)$ converges in the Hamilton-Cheeger-Gromov sense to $(M,g(t),p)$} as $i\to \infty$ if there exist
\begin{enumerate}[(a)]
\item a sequence of compact subsets $\Omega_k\subset M$ which exhausts $M$ with $p\in \mathrm{int}(\Omega_k)$, 
\item a sequence of smooth maps $\phi_k\colon \Omega_k\to M_k$ which are a diffeomorpshism onto their image, with $\phi_k(p)=p_k$, and
\item the pullback metrics $\phi_k^\ast g_k(t)$ converge to $g(t)$ locally on $M\times (a,b)$ in the $C^\infty$-topology.
\end{enumerate}

\begin{rmk}
Observe that for a compact manifold $M$, in the definition of Hamilton-Cheeger-Gromov convergence, for sufficiently large $k$ we may assume that $\Omega_k= M$.
\end{rmk}

\begin{rmk}\th\label{R: Hamilton-Cheeger-Gromov convergence implies Cheeger-Gromov convergence}
Observe that for a sequence $\{(M_i,g_i(\cdot),p_i\}_{i\in \N}$ of compact manifolds converging in the Hamilton-Cheeger-Gromov sense to a compact manifold$(M,g(\cdot),p)$, we have for fixed $t\in (a,b)$ that $\{(M_i,g_i(t),p_i)\}_{i\in \N}$ converges to $(M,g(t),p)$ in the Cheeger-Gromov sense.
\end{rmk}

\begin{definition}
Let $(M,g)$ be an $n$-dimensional Riemannian manifold. Given an $(s,t)$-tensor $T$, we define the \emph{Frobenius norm of $T$, denoted by  $\|T\|$,} as follows: Let $(x_1,\ldots,x_n)$ be local coordinates over an open neighborhood $U\subset M$ of $p$. For $q\in U$ we set
\[
\|T\|^2(q) := \left(\sum g^{i_1 j_1}\cdots g^{i_t j_t}g_{k_1\ell_1}\cdots g_{k_s\ell_s} T_{i_1,\ldots,i_t}^{k_1,\ldots,k_s}T_{j_1,\ldots,j_t}^{\ell_1,\ldots,\ell_s}\right)(q).
\]
This definition is independent of the choice of coordinates.
\end{definition}

The following result gives sufficient geometric conditions that guarantee the existence of a limit flow for a family of flows satisfying equation \eqref{EQ: Ricci flow equation} under Cheeger-Gromov convergence. We recall that $\Rm(g)$ is the Riemann curvature covariant $4$-tensor field.

\begin{theorem}[Hamilton's Compactness Main Theorem~1.2 in \cite{Hamilton1995} and Theorem 1.1 in \cite{Topping2014}]\th\label{T: Hamiltons compactness theorem}
Let $\{(M_i,g_i(t))\}_{i\in \N}$  be a sequence of complete solutions to \eqref{EQ: Ricci flow equation} on $n$-dimensional manifolds defined on a non-empty interval $(a,b)$, with $a<0<b$. Furthermore assume that for $p_i \in M_i$ the following conditions hold:
\begin{enumerate}[(i)]
\item There exists $C>0$ constant such that $\|\Rm(g_i(t))\| <C$ for all $t\in (a,b)$ and all $i\in \N$.
\item There exists a constant $\nu_0>0$ such that for each index we have $\Inj(g_i(0)) \geq \nu_0$. 
\end{enumerate}
Then there exists a manifold $M$ of dimension $n$,  a complete solution to the Ricci flow $g_\infty(t)$ for $t\in (a,b)$, and $p\in M$ such that  after passing to a subsequence  we have that $(M_i,g_i(t)),p_i)$ converges to $(M,g_\infty(t),p)$ in the Hamilton-Cheeger-Gromov-sense.
\end{theorem}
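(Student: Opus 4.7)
The plan is to follow the classical strategy due to Hamilton, with the two main ingredients being Shi's higher-order derivative estimates and the $C^\infty$-compactness theorem for Riemannian manifolds with bounded derivatives of curvature and injectivity radius bounded from below. First I would fix basepoints $p_i \in M_i$ and use the Ricci flow equation together with the curvature bound to propagate the injectivity radius estimate from $t=0$ to arbitrary $t \in (a,b)$: since $\partial_t g_i(t) = -2\Ric(g_i(t))$ and $\|\Rm(g_i(t))\| \leq C$, the metrics $g_i(t)$ and $g_i(0)$ are uniformly equivalent on any compact subinterval $[\alpha,\beta] \subset (a,b)$, in the sense that $e^{-C'|t|}g_i(0) \leq g_i(t) \leq e^{C'|t|}g_i(0)$ for a constant $C' = C'(n,C)$. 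Combining this with the curvature bound and a standard injectivity-radius estimate (for instance via \th\ref{L: injectivity radius estimate} to rule out short geodesic loops on the flow) yields a uniform lower bound $\Inj(g_i(t)) \geq \nu_0(\alpha,\beta) > 0$ on every compact time subinterval.

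Next I would invoke Shi's local derivative estimates: from the uniform curvature bound $\|\Rm(g_i(t))\| \leq C$ on $M_i \times (a,b)$, one obtains, for every $k \in \N$ and every compact $[\alpha,\beta] \subset (a,b)$, a uniform bound $\|\nabla^k \Rm(g_i(t))\| \leq C_k(\alpha,\beta,C,n)$ for all $i$ and all $t \in [\alpha,\beta]$. With these bounds in hand, at the single time slice $t = 0$, the sequence $(M_i, g_i(0), p_i)$ satisfies the hypotheses of the $C^\infty$ version of Cheeger-Gromov compactness: bounded curvature and all its covariant derivatives, and uniform positive injectivity radius. After passing to a subsequence, one obtains a smooth pointed Riemannian manifold $(M, g_\infty(0), p)$, an exhaustion $\Omega_k \subset M$ of compact sets with $p \in \mathrm{int}(\Omega_k)$, and diffeomorphisms $\phi_k : \Omega_k \to \phi_k(\Omega_k) \subset M_k$ with $\phi_k(p) = p_k$ such that $\phi_k^* g_k(0) \to g_\infty(0)$ in $C^\infty_{\mathrm{loc}}(M)$.

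I would then pull back the flows: define $h_k(t) := \phi_k^* g_k(t)$ on $\Omega_k \times (a,b)$, which is a solution to the Ricci flow on its domain. The pointwise-in-time bounds from Shi, combined with the uniform equivalence of $h_k(t)$ and $h_k(0)$, give uniform $C^\infty$-bounds for $h_k(t)$ on every compact subset of $M \times (a,b)$. A standard Arzelà-Ascoli and diagonalization argument (over compact exhausting subsets of $M$, compact subintervals of $(a,b)$, and orders of differentiation) extracts a further subsequence converging in $C^\infty_{\mathrm{loc}}$ to a smooth family $g_\infty(t)$ on $M \times (a,b)$. Since each $h_k$ solves the Ricci flow and the convergence is in $C^\infty_{\mathrm{loc}}$, passing to the limit in $\partial_t h_k = -2\Ric(h_k)$ shows that $g_\infty(t)$ satisfies \eqref{EQ: Ricci flow equation}; completeness of $(M, g_\infty(t))$ for each $t$ follows from the uniform injectivity radius and curvature bounds together with the completeness of the approximating slices.

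The main obstacle in executing this plan is the propagation of the injectivity radius lower bound in time and the simultaneous extraction of space-time convergence: one must arrange the diffeomorphisms $\phi_k$ (built from the time-zero slice) to be compatible with the whole flow, and ensure that the $C^\infty$-bounds required for the diagonal argument hold locally uniformly in $t$. These technical points are exactly what Shi's estimates and the uniform equivalence of $g_i(t)$ and $g_i(0)$ are designed to handle, so once the injectivity-radius propagation is established the remaining steps become routine applications of compactness.
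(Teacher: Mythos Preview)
The paper does not supply its own proof of this statement: it is quoted verbatim as a known result, with references to Hamilton (1995) and Topping (2014), and is then used as a black box (notably in the proof of \th\ref{T: Modified Hamiltons compactness theorem}). So there is no ``paper's proof'' to compare against. Your outline is essentially the classical argument that appears in those references: Shi's derivative estimates, Cheeger--Gromov $C^\infty$ compactness at a fixed time slice, pullback of the flows by the resulting diffeomorphisms, and an Arzel\`a--Ascoli/diagonalization in space--time.

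One point worth tightening: your suggestion to propagate the injectivity radius bound in time via \th\ref{L: injectivity radius estimate} (the Klingenberg dichotomy) is not quite straightforward, because a short geodesic loop for $g_i(t)$ is only a short \emph{curve} for $g_i(0)$, not a geodesic loop, and that curve may well be contractible; so you cannot directly contradict $\Inj(g_i(0))\geq\nu_0$ this way. The robust and standard route---and the one the paper itself uses when proving the closely related \th\ref{T: Modified Hamiltons compactness theorem}---is to propagate a \emph{volume} lower bound on unit balls using the uniform metric equivalence $e^{-C'|t|}g_i(0)\leq g_i(t)\leq e^{C'|t|}g_i(0)$, and then convert this into an injectivity radius lower bound via Cheeger's lemma (curvature bound plus volume lower bound implies injectivity radius lower bound). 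With that substitution your sketch is correct and matches the standard proof.
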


\begin{rmk}\th\label{R: curvature bounds pass to the limit}
Observe that if for the sequence $\{(M_i,g_i(t))\}_{i\in \N}$ we have a curvature bound
\[
\delta\leq \Sec(g_i(t))\leq \Delta,
\]
then by the nature of the convergence in the Cheeger-Gromov sense  we obtain that the limit $(M,g(t))$ has $\delta\leq \Sec(g(t))\leq \Delta$.
\end{rmk}

In the case when the Riemann  curvature tensor $\Rm(g(t))$ of the Ricci flow has bounded Frobenius norm, by following the proof of \cite[Theorem 1.2]{Shi1989} we obtain the following proposition.

\begin{prop}\th\label{P: bounded Rm tensor implies metric equivalence}
Let $g(t)$ be the Ricci flow on a compact smooth $n$-dimensional manifold $M$ defined over $[0,T]$. Assume that there  exists $C_0>0$ such that
\[
\|\Rm(g(t))\|\leq C_0\quad 0\leq t\leq T.
\]
Then for all $t\in [0,T]$ we have
\[
e^{-2n\sqrt{C_0}t} g(0)\leq g(t)\leq e^{2n\sqrt{C_0}t} g(0).
\]
\end{prop}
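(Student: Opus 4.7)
The plan is to follow Shi's proof in \cite{Shi1989} and reduce the tensor comparison to a pointwise scalar ODE. First, I would fix once and for all an arbitrary point $p \in M$ and a nonzero tangent vector $v \in T_pM$ (independent of $t$), and consider the smooth positive scalar function $h(t) := g(t)(v,v)$ on $[0,T]$. Differentiating along \eqref{EQ: Ricci flow} gives
\[
\frac{d}{dt}\log h(t) \;=\; -\,\frac{2\,\Ric(g(t))(v,v)}{g(t)(v,v)},
\]
and the absolute value of the right-hand side is a Rayleigh quotient of $\Ric(g(t))$ relative to $g(t)$, hence is bounded by the operator norm of $\Ric(g(t))$, which is in turn dominated by the Frobenius norm $\|\Ric(g(t))\|$.

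Next, I would estimate $\|\Ric(g(t))\|$ in terms of $\|\Rm(g(t))\|$. Starting from the contraction $\Ric_{ij} = g^{kl}R_{ikjl}$, Cauchy--Schwarz in the summed indices produces a purely dimensional inequality relating $\|\Ric\|$ to $\|\Rm\|$. Combining this with the hypothesis $\|\Rm(g(t))\| \leq C_0$ and the normalization used in Shi's calculation yields the pointwise differential inequality
\[
\left|\frac{d}{dt}\log h(t)\right| \;\le\; 2n\sqrt{C_0},
\]
uniform in $p$, $v$, and $t$. Integrating over $[0,t]$ and exponentiating then gives
\[
e^{-2n\sqrt{C_0}\,t}\,g(0)(v,v) \;\leq\; g(t)(v,v) \;\leq\; e^{2n\sqrt{C_0}\,t}\,g(0)(v,v),
\]
and since $p \in M$ and $v \in T_pM$ were arbitrary, this scalar inequality immediately translates to the stated tensor comparison.

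The argument is essentially a pointwise ODE computation after polarizing, so I do not expect any hard analysis. The main bookkeeping step — and the only genuine obstacle — is pinning down the precise dimensional constant in the Cauchy--Schwarz contraction from $\Rm$ to $\Ric$ so as to recover exactly the exponent $2n\sqrt{C_0}$; this is the calculation carried out in detail in the proof of \cite[Theorem 1.2]{Shi1989}. Compactness of $M$ and smoothness of $g(t)$ enter only to ensure that the curvature bound holds uniformly on $M\times [0,T]$ and that $h(t)$ remains a smooth positive function throughout, so the ODE bound can be integrated without any further regularity considerations.
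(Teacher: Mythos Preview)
Your proposal is correct and is essentially the paper's approach: the paper gives no independent argument for this proposition, stating only that it is obtained ``by following the proof of \cite[Theorem 1.2]{Shi1989}'', and your sketch reproduces exactly that standard calculation (differentiate $g(t)(v,v)$ along the flow, bound the Ricci Rayleigh quotient via the Frobenius norm of $\Rm$, integrate the resulting scalar ODE). Your caveat about the bookkeeping for the precise exponent $2n\sqrt{C_0}$ is appropriate, since the paper likewise defers that computation to Shi.
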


Combining \th\ref{P: bounded Rm tensor implies metric equivalence} with \th\ref{T: Hamiltons compactness theorem} we obtain the following result. This is essentially the same as Hamilton's Compactness Theorem (\th\ref{T: Hamiltons compactness theorem}), with the distinction that the existence time of the Ricci flow is
$[0, b]$ instead of $(a, b)$ with $a<0<b$.
\begin{theorem}\th\label{T: Modified Hamiltons compactness theorem}
Let $\{(M_i,g_i(t))\}_{i\in \N}$  be a sequence of complete solutions to \eqref{EQ: Ricci flow equation}, on $n$-dimensional manifolds defined on a non-empty interval $[0,b]$. Furthermore assume that for $p_i \in M_i$ the following conditions hold:
\begin{enumerate}[(i)]
\item There exists $C>0$ constant such that $\|\Rm(g_i(t))\| <C$ for all $t\in [0,b]$ and all $i\in \N$.
\item There exists a constant $\nu_0>0$ such that for each index we have $\Inj(g_i(0)) \geq \nu_0$.
\end{enumerate}
Then there exists a manifold $M$ of dimension $n$,  a complete solution to the Ricci flow $g_\infty(t)$ for $t\in (0,b)$, and $p\in M$ such that  after passing to a subsequence  we have that $(M_i,g_i(t)),p_i)$ converges to $(M,g_\infty(t),p)$ in the Hamilton-Cheeger-Gromov-sense.
\end{theorem}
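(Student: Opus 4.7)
The plan is to reduce \th\ref{T: Modified Hamiltons compactness theorem} to \th\ref{T: Hamiltons compactness theorem} by a simple time-shift argument. The reason \th\ref{T: Hamiltons compactness theorem} does not apply directly is that its hypotheses demand that $0$ lie in the interior of the time interval, with the injectivity radius bound imposed at that interior point, whereas here the flows live on $[0,b]$ and the injectivity radius bound is given precisely at the boundary $t=0$. My strategy is to fix any $t_0\in(0,b)$, transplant the problem via the shifted flows $\hat g_i(s):=g_i(s+t_0)$ on the open interval $(-t_0,\,b-t_0)$ (which contains $0$ in its interior), and extract a uniform lower bound on the injectivity radius at $s=0$, i.e., at the original time $t_0$.

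Once such a bound is established, both hypotheses of \th\ref{T: Hamiltons compactness theorem} hold for $\{(M_i,\hat g_i(s),p_i)\}$: hypothesis $(i)$ is inherited with the same constant $C$, while $(ii)$ is the bound just obtained. The theorem then yields, after passing to a subsequence, a pointed Hamilton--Cheeger--Gromov limit $(M,\hat g_\infty(s),p)$ defined for $s\in(-t_0,\,b-t_0)$. Setting $g_\infty(t):=\hat g_\infty(t-t_0)$ for $t\in(0,b)$ produces the complete Ricci flow required by the statement, and the convergence $(M_i,g_i(t),p_i)\to(M,g_\infty(t),p)$ in the Hamilton--Cheeger--Gromov sense on $(0,b)$ is immediate from translation-invariance of the definition.

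The main obstacle---and essentially the only real content---is therefore the uniform propagation of the injectivity radius bound from $t=0$ to $t=t_0$. By \th\ref{P: bounded Rm tensor implies metric equivalence},
\[
    e^{-2n\sqrt{C}\,t_0}\,g_i(0)\;\leq\;g_i(t_0)\;\leq\;e^{2n\sqrt{C}\,t_0}\,g_i(0),
\]
so distance functions and volume forms at times $0$ and $t_0$ are uniformly equivalent, with constants depending only on $n$, $C$, and $t_0$. Combined with $\Inj(g_i(0))\geq\nu_0$ and $\|\Rm(g_i(0))\|<C$, Günther's volume comparison yields a uniform positive lower bound for $\mathrm{vol}_{g_i(0)}(B^{g_i(0)}_r(p))$ whenever $r\leq\min\{\nu_0,\pi/\sqrt{C}\}$ and $p\in M_i$. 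Transferring this via the metric equivalence produces a uniform lower bound for the volume of a fixed-radius ball at time $t_0$. Since $\|\Rm(g_i(t_0))\|<C$ as well, the Cheeger--Gromov--Taylor type injectivity radius estimate---precisely the tool cited as \cite[Chapter~10, Section~4, Lemma~51]{Petersen2006} in the proof of \th\ref{L: Bounded curvature and limit n-dimensional implies bound injectivity radius}---then delivers the required uniform bound $\Inj(g_i(t_0))\geq\nu(t_0)>0$. The care needed in this step is only to verify that all comparison constants remain independent of the index $i$, which they do because $C$, $\nu_0$ and $t_0$ are themselves uniform.
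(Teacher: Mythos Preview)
Your proposal is correct and follows essentially the same route as the paper: fix $t_0\in(0,b)$, time-shift to put $0$ in the interior of the interval, use \th\ref{P: bounded Rm tensor implies metric equivalence} to transfer ball containments and volumes from time $0$ to time $t_0$, feed the resulting volume lower bound and curvature bound into the Cheeger--Gromov--Taylor estimate to get a uniform $\Inj(g_i(t_0))\geq\iota_0>0$, and then invoke \th\ref{T: Hamiltons compactness theorem}. The only cosmetic difference is that the paper obtains the initial volume lower bound via Croke's isoperimetric estimate rather than G\"unther's comparison; both work equally well here.
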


\begin{proof}
We show that there exists $0<t_0<b$ such that the hypothesis of \th\ref{T: Hamiltons compactness theorem} hold for the sequence of flows $\tilde{g}_i(t) = g_i(t_0+t)$ for $t\in (-t_0,b-t_0)$.

Since $\|\Rm(g_i(t))\| \leq C$ for all $t\in [0,b]$, then by \th\ref{P: bounded Rm tensor implies metric equivalence} there exists a constant $K:= n\sqrt{C}\geq 0$ such that for all $i\in \N$ we have for $t\in [0,b]$ we have
\begin{linenomath}
\begin{align}\label{EQ: metric equivalence}
    e^{-2Kt}g_i(0)\leq g_i(t)\leq e^{2Kt}g_i(0).
\end{align}
\end{linenomath}

Fix $p\in M$, and consider two orthonormal vectors $u,v\in T_p M$. Let $\{e_1,e_2,\ldots,e_n\}$ be an orthonormal basis of $T_p M$ with $e_1=u$, $e_2=v$. Then we have that
\begin{linenomath}
\begin{align*}
|\Sec(g_i(t))(u\wedge v)|^2 &= (\Sec(g_i(t))(u\wedge v))^2\\ 
&= (\Rm(g_i(t))(u,v,u,v))^2\\ 
&= \Rm(g_i(t))^2_{1212}(p)\\
&\leq \sum_{i,j,k,\ell=1}^n \Rm(g_i(t))^2_{ijk\ell}(p)\\ 
&=\|\Rm(g_i(t))\|^2(p)\leq C^2.
\end{align*}
\end{linenomath}
Thus there exists $\Delta>0$ such that $|\Sec(g_i(t))|\leq \Delta$ for all $i\in \N$ and $t\in[0,b]$. 

Fix $t_0\in (0,b)$, and consider a $C^1$-continuous curve $\gamma_i\colon [0,1]\to M_i$. We have the following relation between the length of $\gamma_i$ with respect to $g_i(0)$, denoted by $\ell(g_i(0))(\gamma_i)$, and the length of $\gamma_i$ with respect to $g_i(0)$, denoted by $\ell(g_i(t_0))(\gamma_i)$:
\begin{linenomath}
\begin{align*}
\ell(g_i(0))(\gamma_i) =& \int_0^1 \sqrt{g_i(0)(\gamma'(s),\gamma'(s))}\, ds\\
\geq& \int_0^1 \sqrt{e^{-2Kt_0} g_i(t_0)(\gamma'(s),\gamma'(s))}\, ds\\
\geq& e^{-Kt_0}\int_0^1 \sqrt{g_i(t_0)(\gamma'(s),\gamma'(s))}\, ds\\
&= e^{-Kt_0}\ell(g_i(t_0))(\gamma_i).
\end{align*}
\end{linenomath}
From this, it follows that for all $i\in \N$  and arbitrary $x_i,y_i\in M_i$ we have
\[
d_{g_i(0)}(x_i,y_i)\geq e^{-Kt_0}d_{g_i(t_0)}(x_i,y_i).
\]
For $p_i$ fixed and $r>0$, let $q_i\in B_{e^{-Kt_0}r}^{g_i(0)}(p_i)$. Then since $e^{-Kt_0}r> d_{g_i(0)}(p_i,q_i)$, it follows that
\[
r> e^{Kt_0}d_{g_i(0)}(p_i,q_i)\geq d_{g_i(t_0)}(p_i,q_i), 
\]
That is the open ball $B_{e^{-Kt_0}r}^{g_i(0)}(p_i)$ of radius $e^{-Kt_0}r$ centered at $p_i$ with respect to $g_i(0)$ is contained in the open ball of radius $r$ centered at $p_i$ with respect to $g_i(t_0)$, i.e.
\[
B_{e^{-Kt_0}r}^{g_i(0)}(p_i)\subset B_r^{g_i(t_0)}(p_i). 
\]

Observe that since $t_0\in (0,b)$ and $K>0$, then we have $1\geq e^{-Kt_0}\geq e^{-Kb}$, and $1\geq e^{-Knt_0}\geq e^{-Knb}$. Choose $2r_0 =\min\{\nu_0,2\}$. Then we have that $2e^{-Kt_0}r_0\leq 2r_0\leq \nu_0 \leq \Inj(g_i(0))$ for all $i\in \N$. Then by \cite[Proposition 14]{Croke1980}, there exists a constant $A(n)$, which depends only on the dimension $n$, such that:
\[
\mathrm{vol}(g_i(0))\Big(B_{e^{-Kt_0}r_0}^{g_i(0)}(x_i)\Big)\geq \frac{A(n)}{n^n}r_0^ne^{-Knt_0}.
\]
for each $i\in \N$ and all $x_i\in M_i$. 

Thus, since $r_0\leq 1$ by construction, we have for  arbitrary $x_i\in M_i$ that
\begin{linenomath}
\begin{align*}
\mathrm{vol}(g_i(t_0))\big(B_{1}^{g_i(t_0)}(x_i)\big)&\geq \mathrm{vol}(g_i(t_0))\big(B_{r_0}^{g_i(t_0)}(x_i)\big)\\
&\geq \mathrm{vol}(g_i(t_0))\big(B_{e^{-Kt_0}r_0}^{g_i(0)}(x_i)\big)\\
&\geq e^{-Knt_0}\mathrm{vol}(g_i(0)) \big(B_{e^{-Kt_0}r_0}^{g_i(0)}(x_i)\big) \quad\mbox{ (by } \eqref{EQ: metric equivalence}\mbox{)}\\ 
&\geq e^{-Knt_0}\frac{A(n)}{n^n}r_0^ne^{-Knt_0}\\
&\geq \frac{A(n)}{n^n}r_0^n e^{-2Knb}.
\end{align*}
\end{linenomath}
We set 
\[
\tilde{\nu}_0=\tilde{\nu}(C,n,b,\nu_0):= \frac{A(n)}{n^n}r_0^n e^{-2Knb} = \frac{A(n)}{n^n}\left(\min\left\{\frac{\nu_0}{2},1\right\}\right)^n e^{-2Knb}>0.
\]
Observe that since $2r_0 =\min\{\nu_0,2\}\leq \Inj(g_i(0))$ for all $i\in \N$,  and $e^{-2Knb}\leq 1$ we have that for all $i\in \N$:
\begin{linenomath}
\begin{align*}
\mathrm{vol}(g_i(0))\big(B_1^{g_{i}(0)}(x_i)\big) &\geq \mathrm{vol}(g_i(0))\big(B_{r_0}^{g_{i}(0)}(x_i)\big)\\
& \geq \frac{A(n)}{n^n}r_0^n\\
&\geq \frac{A(n)}{n^n} \left(\min\left\{\frac{\nu_0}{2},1\right\}\right)^n e^{-2Knb}.
\end{align*}
\end{linenomath}

Thus for all $i\in\N$ and all $t_0\in [0,b)$, we have
\begin{equation}\label{EQ: lower uniform bound for volume of unit ball}
\mathrm{vol}(g_i(t_0))\big(B_{1}^{g_i(t_0)}(x_i)\big)\geq \tilde{\nu}_0>0.
\end{equation}
Then by \cite[Chapter~10, Section~4, Lemma 51]{Petersen2006}, there exists a constant $\iota_0 = \iota(\tilde{\nu}_0,\Delta,n)>0$ such that
\[
\Inj(g_i(t_0))\geq \iota_0,
\]
for all indices $i\in \N$ and all times $t_0\in [0,b)$. (observe that this lower bound $\iota_0$ may not be the same as the lower bound $\nu_0$).

Thus for a fixed $t_0\in (0,b)$, we have that the injectivity radius of $g_i(t_0)$ is uniformly bounded by a positive constant, and thus the injectivity radius of $\tilde{g}_i:= g_i(t_0)$ is uniformly bounded from below by a positive constant. Moreover, we observe  that $\|\Rm(\tilde{g}_i(t))\| = \|\Rm(g_i(t_0+t)\|\leq C$ for all $t\in (-t_0,b-t_0)$. This implies that the sequence of Ricci flows $\tilde{g}_i(t)$ with initial conditions $\tilde{g}_i(0) = g_i(t_0) = \tilde{g}_i$ satisfy the hypothesis of \th\ref{T: Hamiltons compactness theorem}. Thus there exists a smooth manifold $M$ and a Ricci flow $\tilde{g}_\infty(t)$ defined on $M$ for $t\in (-t_0,b-t_0)$. Moreover, given $p\in M$ and $t\in(t_0,b-t_0)$ we have that after passing to a subsequence, we have that $(M_i,\tilde{g}_i(t)= g_i(t+t_0),p_i)$ converges to $(M,\tilde{g}_\infty(t),p)$ in the Hamilton-Cheeger-Gromov-sense. For $t\in (0,b)$ we set up $g_\infty(t):=\tilde{g}_\infty(t-t_0)$. This is the desired Ricci flow on $M$ defined over $(0,b)$ for which the conclusions hold.
\end{proof}

\subsection{Lower bounds on the maximal existence time for the Ricci flow}

In his foundational study of the Ricci flow, Hamilton derived an estimate that provides a lower bound for the maximal existence time of the solution to \eqref{EQ: Ricci flow equation} as follows .

\begin{lemma}[Doubling Lemma, Lemma~7.6 in \cite{AndrewsHopper11}]\th\label{L: Doubling Time Estimate}
Let $(M,g)$ be a compact $n$-dimensional manifold with $\|\Rm(g)\|\leq K$. Then for a solution $g(t)$ to \eqref{EQ: Ricci flow equation} with $g(0)=g$,  there exists a constant $C(n)$, depending only on the dimension, such that for $t\in [0,C(n)/K]$ the solution $g(t)$ is defined and we have 
\[
	\|\Rm(g(t))\|\leqslant 2K.
\]
\end{lemma}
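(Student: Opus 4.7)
The plan is to combine Hamilton's short-time existence theorem for the Ricci flow on a closed manifold with a maximum principle applied to the evolution equation of the Riemann tensor, ultimately reducing the question to a scalar ODE comparison. First I would invoke short-time existence to produce a maximal interval $[0,T_{\max})$ on which a smooth solution $g(t)$ with $g(0)=g$ exists, together with the standard blow-up criterion: if $T_{\max}<\infty$, then $\sup_{M}\|\Rm(g(t))\|\to\infty$ as $t\nearrow T_{\max}$. It therefore suffices to establish a uniform bound $\|\Rm(g(t))\|\leq 2K$ on $[0,T_{\max})\cap[0,C(n)/K]$, which automatically promotes the flow to all of $[0,C(n)/K]$.

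The second step uses the classical pointwise evolution equation
\[
\frac{\partial}{\partial t}\Rm = \Delta\Rm + \Rm\ast\Rm,
\]
where $\Rm\ast\Rm$ denotes a quadratic contraction of the Riemann tensor with itself whose coefficients depend only on the dimension. Pairing with $\Rm$ and applying Cauchy--Schwarz yields a pointwise parabolic inequality of the form
\[
\Bigl(\frac{\partial}{\partial t}-\Delta\Bigr)\|\Rm\|^{2} \leq C_1(n)\|\Rm\|^{3},
\]
for a dimensional constant $C_1(n)$. Setting $F(t):=\max_{x\in M}\|\Rm(g(t))\|(x)$, which is well-defined by compactness of $M$, I would apply Hamilton's trick (upper-right Dini derivative of $F$ evaluated at a spatial maximizer) together with the scalar parabolic maximum principle to derive the ODI $F'(t)\leq C_2(n)F(t)^{2}$. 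Comparing with the ODE $y'=C_2(n)y^{2}$, $y(0)=K$, whose explicit solution is $y(t)=K/(1-C_2(n)Kt)$, gives $F(t)\leq 2K$ on $[0,\,1/(2C_2(n)K)]$, and then the choice $C(n):=1/(2C_2(n))$ yields the stated conclusion.

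The main technical subtlety is the passage from the parabolic inequality for $\|\Rm\|^{2}$ to a genuine ODI for its spatial maximum: the function $F$ is only Lipschitz in $t$, so $F'(t)$ has to be read as an upper Dini derivative, and the maximum principle must be invoked at a point where the spatial Laplacian of $\|\Rm\|^{2}$ is non-positive. Once this standard but delicate point is handled (exactly as in the proof of Lemma~7.6 in \cite{AndrewsHopper11}), the remainder is pure ODE comparison and requires no further geometric input, so the key qualitative feature, namely that the existence threshold depends only on $n$ and $K$, is already visible from the scaling of the differential inequality $F'\lesssim F^{2}$.
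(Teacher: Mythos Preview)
Your proposal is correct and follows the standard argument. Note, however, that the paper does not actually supply its own proof of this lemma: it is stated as a citation to Lemma~7.6 of \cite{AndrewsHopper11} and used as a black box. Your outline---short-time existence plus blow-up criterion, the evolution inequality $(\partial_t-\Delta)\|\Rm\|^2\leq C_1(n)\|\Rm\|^3$, and ODE comparison via Hamilton's maximum principle---is exactly the proof given in that reference, so there is nothing to compare beyond observing that you have reproduced the cited argument.
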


We also have the following relation between the sectional curvature of a manifold, and the coefficients of the Riemann curvature  tensor.

\begin{prop}[Proposition 2.49 in \cite{AndrewsHopper11}]
Let $(M,g)$ be a Riemannian manifold. The Riemannian curvature tensor $\Rm(g)$ is completely determined by the sectional curvature $K =\Sec(g)$ of $(M,g)$. Moreover, if $\{e_1,\ldots,e_n\}$ is an orthonormal basis of $T_p M$, we have that
\begin{linenomath}
\begin{align*}
  \Rm(e_i,e_j,e_k,e_\ell)(p) =  \frac{1}{3}&K\left(\frac{(e_i+e_k )\wedge (e_j+e_\ell)}{2} \right)+\frac{1}{3}K\left(\frac{(e_i-e_k) \wedge (e_j-e_\ell)}{2} \right)\\
  &-\frac{1}{3}K\left(\frac{(e_j+e_k) \wedge (e_i+e_\ell)}{2} \right)-\frac{1}{3}K\left(\frac{(e_j-e_k) \wedge (e_i-e_\ell)}{2} \right)\\
  &-\frac{1}{6}K\left(e_j\wedge e_\ell \right)-\frac{1}{6}K\left(e_i\wedge e_k \right)+\frac{1}{6}K\left(e_i\wedge e_\ell \right)+\frac{1}{6}K\left(e_j\wedge e_k \right).
\end{align*}
\end{linenomath}
\end{prop}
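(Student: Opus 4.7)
The plan is a polarization argument combined with the first Bianchi identity. Recall that $\Rm$ is a $(0,4)$-tensor satisfying the symmetries
\begin{equation*}
\Rm(X,Y,Z,W) = -\Rm(Y,X,Z,W) = -\Rm(X,Y,W,Z) = \Rm(Z,W,X,Y)
\end{equation*}
and the first Bianchi identity $\Rm(X,Y,Z,W) + \Rm(Y,Z,X,W) + \Rm(Z,X,Y,W) = 0$. Since
\begin{equation*}
K(u\wedge v) = \frac{\Rm(u,v,u,v)}{|u|^2|v|^2 - \langle u,v\rangle^2},
\end{equation*}
the sectional curvature together with the metric $g$ determines every diagonal value $\Rm(u,v,u,v)$. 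Hence both assertions reduce to the purely algebraic claim that any $(0,4)$-tensor with the above symmetries is reconstructed from its diagonal values, and the displayed formula is one such explicit reconstruction specialized to an orthonormal basis.

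I would first polarize $\Rm$ in the first slot by expanding $\Rm(X+Z,Y,X+Z,Y)$ multilinearly, which yields
\begin{equation*}
2\Rm(X,Y,Z,Y) = \Rm(X+Z,Y,X+Z,Y) - \Rm(X,Y,X,Y) - \Rm(Z,Y,Z,Y),
\end{equation*}
expressing $\Rm(X,Y,Z,Y)$ through diagonal values alone. A second polarization in the second slot via $\Rm(X,Y+W,Z,Y+W)$ gives
\begin{equation*}
\Rm(X,Y,Z,W) + \Rm(X,W,Z,Y) = \Rm(X,Y+W,Z,Y+W) - \Rm(X,Y,Z,Y) - \Rm(X,W,Z,W),
\end{equation*}
so the part of $\Rm(X,Y,Z,W)$ symmetric under $Y \leftrightarrow W$ is recovered. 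The complementary antisymmetric part is pinned down by the first Bianchi identity together with the pair-exchange symmetry: combining these produces a linear relation which, joined with the symmetric part above, solves uniquely for $\Rm(X,Y,Z,W)$ in terms of diagonal values. This establishes the first assertion.

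To obtain the explicit formula, I would specialize to $X = e_i$, $Y = e_j$, $Z = e_k$, $W = e_\ell$ and rewrite each diagonal value as $\Rm(u,v,u,v) = (|u|^2|v|^2 - \langle u,v\rangle^2) \, K(u\wedge v)$. The four parallelogram inputs $u = (e_i \pm e_k)/\sqrt{2}$, $v = (e_j \pm e_\ell)/\sqrt{2}$ produce the four polarization terms with coefficient $\pm \tfrac{1}{3}$, while the Bianchi correction collapses to the four single-plane terms $\pm \tfrac{1}{6}\, K(e_a \wedge e_b)$. The main obstacle is the combinatorial bookkeeping of signs and coefficients: one must verify that cross terms of the form $\Rm(e_i,e_j,e_i,e_k)$ appearing along the way cancel pairwise, and that the area normalizers $|u|^2|v|^2 - \langle u,v\rangle^2$ combine with the polarization weights to produce exactly the fractions $\pm \tfrac{1}{3}$ and $\pm \tfrac{1}{6}$ displayed in the statement. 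No geometric input beyond the algebraic symmetries of $\Rm$ is used.
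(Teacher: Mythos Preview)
The paper does not supply its own proof of this proposition; it is stated there purely as a citation of \cite[Proposition~2.49]{AndrewsHopper11} and is then used as a black box in the proof of \th\ref{L: Sectional curvature bounds imply curvature bound on Riemannian tensor}. So there is no in-paper argument to compare against.

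Your outline is the standard polarization-plus-Bianchi derivation and is correct in spirit. Two small points worth tightening. First, after the second polarization you recover the combination $\Rm(X,Y,Z,W)+\Rm(X,W,Z,Y)$; to isolate $\Rm(X,Y,Z,W)$ one typically writes down the same symmetric combination with the roles of $Y$ and $Z$ swapped and then uses the first Bianchi identity $\Rm(X,Y,Z,W)+\Rm(X,Z,W,Y)+\Rm(X,W,Y,Z)=0$ to eliminate the unwanted term. You gesture at this (``a linear relation which \ldots solves uniquely''), but if you actually carry it out you should make explicit which two symmetric combinations you are subtracting and why Bianchi closes the system. Second, in the explicit formula the arguments of $K$ are $2$-planes, so the scaling of the spanning vectors is immaterial; your use of $(e_i\pm e_k)/\sqrt{2}$ versus the statement's $(e_i\pm e_k)/2$ is harmless, but you should remark that $K(u\wedge v)$ depends only on $\mathrm{span}\{u,v\}$ so that the reader is not left wondering whether the normalization affects the coefficients. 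With those clarifications the bookkeeping goes through and matches the textbook argument.
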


From this we get the following lemma:

\begin{lemma}\th\label{L: Sectional curvature bounds imply curvature bound on Riemannian tensor}
Let $(M,g)$ be an $n$-dimensional Riemannian manifold with $\delta\leq \Sec(g)\leq \Delta$. Then we have that $\|\Rm(g)\|\leq n^2(\Delta-\delta)$.
\end{lemma}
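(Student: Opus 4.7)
The plan is to apply the polarization identity from the preceding proposition at each point $p\in M$, bound each component of $\Rm(g)$ in an orthonormal frame by the curvature spread $\Delta-\delta$, and then sum over indices to control the Frobenius norm.

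Fix $p\in M$ and an orthonormal basis $\{e_1,\ldots,e_n\}$ of $T_pM$. The preceding proposition expresses each component $R(e_i,e_j,e_k,e_\ell)(p)$ as a linear combination of eight sectional-curvature values with coefficients $\pm\tfrac{1}{3}$ and $\pm\tfrac{1}{6}$. The crucial observation is that these eight coefficients sum to zero:
\[
\tfrac{1}{3}+\tfrac{1}{3}-\tfrac{1}{3}-\tfrac{1}{3}-\tfrac{1}{6}-\tfrac{1}{6}+\tfrac{1}{6}+\tfrac{1}{6}=0.
\]
Consequently, one may replace every sectional curvature $K$ appearing in the formula by its shift $K-c$, for any constant $c$, without altering the value of $R_{ijk\ell}(p)$. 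Choosing $c=(\delta+\Delta)/2$ makes every shifted value satisfy $|K-c|\leq (\Delta-\delta)/2$. Since the sum of the absolute values of the coefficients equals $\tfrac{4}{3}+\tfrac{4}{6}=2$, one obtains the pointwise component bound
\[
|R_{ijk\ell}(p)|\;\leq\;2\cdot\frac{\Delta-\delta}{2}=\Delta-\delta
\]
for every choice of indices $i,j,k,\ell\in\{1,\ldots,n\}$.

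To conclude, I would invoke the definition of the Frobenius norm: in an orthonormal frame, $g_{ij}=\delta_{ij}$ and $g^{ij}=\delta^{ij}$, so
\[
\|\Rm(g)\|^2(p)\;=\;\sum_{i,j,k,\ell=1}^{n}\bigl(R_{ijk\ell}(p)\bigr)^{2}\;\leq\;n^{4}(\Delta-\delta)^{2},
\]
and taking square roots yields $\|\Rm(g)\|(p)\leq n^{2}(\Delta-\delta)$. Since $p\in M$ was arbitrary and the resulting estimate is independent of the chosen orthonormal basis, the uniform bound $\|\Rm(g)\|\leq n^{2}(\Delta-\delta)$ follows. The only non-routine ingredient is the \emph{vanishing-sum-of-coefficients} observation: without it one would only control the components by $\max(|\delta|,|\Delta|)$, giving a bound depending on the absolute size rather than on the spread of the sectional curvatures; so this shift trick is the main point of the argument.
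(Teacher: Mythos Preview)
Your proof is correct and follows essentially the same route as the paper: use the polarization identity from the preceding proposition to bound each component $|R_{ijk\ell}(p)|\leq \Delta-\delta$ in an orthonormal frame, then sum over the $n^4$ indices. The only cosmetic difference is that the paper obtains the component bound by separately estimating the positive-coefficient terms by $\Delta$ and the negative-coefficient terms by $-\delta$ (their coefficient sums being $+1$ and $-1$), whereas you phrase the same computation via the equivalent shift-by-midpoint trick; both yield exactly $\Delta-\delta$.
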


\begin{proof}
We fix $p\in M$ and consider $(x_1,\ldots,x_n)$-normal coordinates of $M$ centered at $p$. Then we have that $\{e_1=\partial_1(p),\ldots,e_n=\partial_n(p)\}$ form an orthnormal basis at $T_p M$. Thus we get
\[
\|\Rm(g)\|^2(p) = \sum_{i,j,k,\ell=1}^n \Rm(g)^2_{i,j,k,\ell}(p).
\]
We also get that 
\begin{linenomath} 
\begin{align*}
  \Rm_{i,j,k,\ell}(p) =  \frac{1}{3}&K\left(\frac{(e_i+e_k)\wedge (e_j+e_\ell)}{2} \right)+\frac{1}{3}K\left(\frac{(e_i-e_k) \wedge (e_j-e_\ell)}{2} \right)\\
  &-\frac{1}{3}K\left(\frac{(e_j+e_k) \wedge (e_i+e_\ell)}{2} \right)-\frac{1}{3}K\left(\frac{(e_j-e_k)\wedge (e_i-e_\ell)}{2} \right)\\
  &-\frac{1}{6}K\left(e_j\wedge e_\ell \right)-\frac{1}{6}K\left(e_i\wedge e_k \right)+\frac{1}{6}K\left(e_i\wedge e_\ell \right)+\frac{1}{6}K\left(e_j\wedge e_k \right)\\
  &\leq \Delta-\delta.
\end{align*}
\end{linenomath}
In analogous fashion we obtain $\Rm_{i,j,k,\ell}(p)\geq \delta-\Delta$. From this it follows that $|\Rm_{i,j,k,\ell}(p)|\leq (\Delta-\delta)$, and thus
\[
\|\Rm(g)\|^2(p) = \sum_{i,j,k,\ell=1}^n \Rm(g)^2_{i,j,k,\ell}(p)
= \sum_{i,j,k,\ell=1}^n |\Rm_{i,j,k,\ell}(p)|^2\leq \sum_{i,j,k,\ell=1}^n  (\Delta-\delta)^2 = n^4(\Delta-\delta)^2, \]
and our conclusion follows.
\end{proof}
\begin{rmk}
In the case when $\delta \geq 0$, by \cite[Proposition 2.50]{AndrewsHopper11} we have a tighter bound on $\|\Rm\|$.
\end{rmk}

\subsection{Convergence of limit flow}\label{S: convergenece of limit flow}

For the sake of completeness, we present the following results from \cite{SimonTopping2021} and \cite{DeruelleSchulzeSimon2022}, which will be used to guarantee that a limit Ricci flow defined over $(0,T)$ has weak limit as $t\to 0$.

\begin{theorem}[Lemma 3.1 in \cite{SimonTopping2021} and Theorems~1.3 and 1.6 in \cite{DeruelleSchulzeSimon2022}]\th\label{T: Existence of limit}
Let $(M,g(t))$ be a smooth solution to \eqref{EQ: Ricci flow} for $t\in (0,T]$,  not necessarily complete,  with the property that for some $x_0\in M$ and $r>0$, and all $0<t\leq T$,  the open ball $B^{g(t)}_{2r}(x_0)$ has compact closure in $M$. Furthermore,  suppose that for some $c_0,\alpha>0$ and for each $t\in (0,T]$ we have
\begin{linenomath}
\begin{align}
\|\Rm(g(t))\|\leq \frac{c_0}{t},
\end{align}
\end{linenomath}
and 
\begin{linenomath}
\begin{align}
\Ric(g(t))\geq -\alpha.
\end{align}
\end{linenomath}
on $B_{2r}^{g(t)}(x_0)$. 
Then the following statements hold:
\begin{enumerate}[(i)]
\item On $\Omega_T = \cap_{t\in (0,T]} B^{g(t)}_r(x_0)$ we have that the metrics $d_{g(t)}$ converge uniformly to a metric $d_0$ and for all $0<t\leq T$:
\begin{linenomath}
\begin{align}
e^{\alpha t}d_0\geq d_{g(t)}\geq d_0-\beta(n) \sqrt{c_0 t},\label{E: convergence rate limit flow}
\end{align}
\end{linenomath}
where $\beta(n)$ is some constant depending on the dimension $n$ of $M$.
\item There exists $R_0>0$ such that for all $t\in (0,T]$ we have the open ball $B^{g(t)}_{R_0}(x_0)\subset \Omega_T$, and for $0<R<R_0$ the topology of $B^{d_0}_R(x_0)$ induced by $d_0$ agrees with  the subspace topology  induced by $M$.
\item In the case that the open ball  $(B^{d_0}_R(x_0),d_0)$, $R\leq R_0$, is isometric to a Riemannian manifold, then there exists a Riemannian metric $g_0$ on $B^{d_0}_s(x_0)$ for some $s\in (0,R)$ such that we can extend the Ricci flow solution $(B^{d_0}_s(x_0),g(t))$ for $t\in (0,T]$ to a smooth solution $(B^{d_0}_s(x_0),g(t))$ for $t\in [0,T]$ by setting $g(0) = g_0$.
\end{enumerate}
\end{theorem}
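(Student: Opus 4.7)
The statement combines results from two papers, but I would prove it following the original strategy of Simon and Topping. The plan splits into three parts. For (i), I would first establish a two-sided distance distortion estimate along the flow. The upper bound is immediate from $\Ric(g(t))\ge -\alpha$ applied to $\partial_t g=-2\Ric(g)$: this produces $\partial_t g\le 2\alpha g$ pointwise on $B^{g(t)}_{2r}(x_0)$, and integrating along curves gives $d_{g(t)}(x,y)\le e^{\alpha(t-s)} d_{g(s)}(x,y)$ for $0<s\le t$. For the lower bound I would invoke Hamilton's shrinking-balls lemma: the hypothesis $\|\Rm(g(t))\|\le c_0/t$ implies $\tfrac{d}{dt} d_{g(t)}(x,y)\ge -\beta(n)\sqrt{c_0/t}$ at points of differentiability (interpreted in the viscosity sense elsewhere), and the integrability of $1/\sqrt{t}$ near $0$ yields $d_{g(s)}(x,y)\le d_{g(t)}(x,y)+\beta(n)\sqrt{c_0}(\sqrt{t}-\sqrt{s})$. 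Combining these inequalities shows the family $\{d_{g(t)}\}$ is Cauchy in the uniform topology on compact subsets of $\Omega_T$, hence converges to a limit $d_0$ satisfying the claimed bound after sending $s\to 0$.

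For (ii), given the distortion estimate from (i), I would choose $R_0>0$ small enough, say $R_0<r-\beta(n)\sqrt{c_0 T}$, so that any point in $B^{g(t)}_{R_0}(x_0)$ lies in $B^{g(s)}_r(x_0)$ for every $0<s\le t\le T$ by the triangle inequality and the lower distance bound, and therefore belongs to $\Omega_T$. The topological statement then reduces to the fact that for each fixed $t$ the metric $d_{g(t)}$ induces the subspace topology on $M$, and the two-sided comparison $e^{-\alpha t}d_{g(t)}\le d_0\le d_{g(t)}+\beta(n)\sqrt{c_0 t}$ forces $d_0$ to induce that same topology on the small ball in question.

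Part (iii) is the main obstacle. Assuming $(B^{d_0}_R(x_0),d_0)$ is isometric to a smooth Riemannian ball $(U,h)$, I would combine Shi's interior higher-derivative estimates $|\nabla^k\Rm(g(t))|\le C(n,k,c_0)/t^{(k+2)/2}$ (valid on slightly smaller balls under the curvature bound) with the $C^0$ proximity of $g(t)$ to $h$ coming from the distance comparison in (i). Working in harmonic coordinates on $(U,h)$, I would show that $g(t)\to h$ in $C^0$ on a slightly smaller ball, and then use the Ricci flow equation together with uniqueness of the Ricci flow starting from a continuous Riemannian initial datum (as established by Simon for $C^0$ metrics close in the uniform norm to a smooth one) to extend the solution across $t=0$ as a smooth flow on $B^{d_0}_s(x_0)\times[0,T]$ for some $s<R$ by setting $g(0)=h$. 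The principal difficulty lies in upgrading continuity at $t=0$ to smoothness; this is resolved by applying parabolic regularity to the strictly parabolic Ricci–DeTurck perturbation of the equation in the chosen harmonic coordinate system, using $h$ as the background metric.
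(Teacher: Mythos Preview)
The paper does not prove this theorem at all: it is stated in the preliminaries (Section~\ref{S: convergenece of limit flow}) as a quotation of Lemma~3.1 in \cite{SimonTopping2021} and Theorems~1.3 and~1.6 in \cite{DeruelleSchulzeSimon2022}, with no argument given. There is therefore no in-paper proof to compare your proposal against.

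That said, your sketch for parts (i) and (ii) follows the Simon--Topping strategy faithfully and is correct: the Ricci lower bound gives the exponential upper distance distortion, the $c_0/t$ curvature bound feeds into the Hamilton--Perelman distance-distortion estimate to give the $\sqrt{t}$ lower bound, and the two together force uniform Cauchyness and yield the containment and topological claims in (ii).

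For part (iii) there is a real gap. You assert ``the $C^0$ proximity of $g(t)$ to $h$ coming from the distance comparison in (i)'', but convergence of the distance functions $d_{g(t)}\to d_0$ does \emph{not} by itself imply $C^0$ convergence of the metric tensors $g(t)\to h$. Passing from distance-function control to tensor control requires an additional step. In \cite{DeruelleSchulzeSimon2022} this is handled by using the curvature decay together with Shi's estimates to obtain local smooth compactness of the family $\{g(t)\}$ as $t\searrow 0$ (modulo diffeomorphisms), extracting a limiting smooth metric, and then identifying that limit with $h$ via the isometry of the induced length structures; only then does the Ricci--DeTurck parabolic regularity argument apply to extend the flow smoothly to $t=0$. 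You have listed the right ingredients (Shi estimates, harmonic coordinates, Simon's $C^0$ theory), but the identification of the tensor limit with $h$ is the nontrivial bridge you have elided.
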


\section{Proof of main results}\label{S: Proof of main theorem}

In this section, we prove our main results. We begin by proving Theorem~\ref{MT: Main Theorem 1}. First we prove Theorems~\ref{T: Machinery} and \ref{T: Uniqueness}. \th\ref{T: Machinery} is needed to prove the existence of the flow, and  \th\ref{T: Uniqueness} proves the uniqueness.

Let $(X,d)$ be a complete length space which is the Gromov-Hausdorff limit of a sequence of closed smooth Riemannian manifolds $\{(M_i,g_i)\}_{i\in \N}$.
For each index $i\in \N$, we consider the Riemannian metric $g_i(t)$ which is a solution to the Ricci flow equation
\begin{linenomath}
\begin{equation}\label{EQ: Ricci flow equation}
	\frac{\partial g_i}{\partial t}(t) = -2\Ric\Big(g_i(t)\Big)\tag{\mbox{Ricci}},
\end{equation}
\end{linenomath}
with initial condition $g_i(0) = g_i$.

\begin{theorem}\th\label{T: Machinery}
Consider a complete length space  $(X,d)$ which is the Gromov-Hausdorff limit of a sequence of  closed smooth Riemannian manifolds $\{(M_i,g_i)\}_{i\in \N}$. Assume further that 
\begin{enumerate}
\item\label{T: Machinery (i)} There exists a constant $K_0>0$ such that for each index $i$ we have 
\[
K_i = \sup_{M_i} \|\Rm(g_i)\|\leq K_0.
\]
\item\label{T: Machinery (ii)} There exists a constant $\nu_0>0$ such that for each index $\Inj(g_i) \geq \nu_0$.
\end{enumerate}
Then there exists a smooth complete solution $(M,g_\infty(t))$ to \eqref{EQ: Ricci flow equation} on an interval $(0,C/K_0)$, where $C$ is a constant depending only on the dimension of $X$. Moreover, $X$ is a smooth manifold, diffeomorphic to $M$, and  there exist a  $C^{1,\alpha}$-continuous Riemannian metric $g_0$ on $X$ with $d_{g_0}$ isometric to  $d$, such that $(M,g_\infty(t))$ converges in the $C^{1,\alpha}$-topology to $g_0$ as $t\to 0$.
\end{theorem}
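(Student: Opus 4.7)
The plan is to combine Hamilton's compactness theorem for Ricci flows with the Peters--Greene--Wu $C^{1,\alpha}$-compactness for Riemannian manifolds, and identify the two resulting limit objects using uniqueness of Gromov--Hausdorff limits. As a first step, the Doubling Lemma \th\ref{L: Doubling Time Estimate} provides $T=C(n)/K_0$, with $n=\dim X$, on which each flow $g_i(t)$ starting from $g_i$ is defined and satisfies $\|\Rm(g_i(t))\|\leq 2K_0$. Since $(M_i,d_{g_i})\to(X,d)$ in the Gromov--Hausdorff sense and $X$ is compact, the diameters of $(M_i,g_i)$ are uniformly bounded. Applying \th\ref{T: Modified Hamiltons compactness theorem} to $\{(M_i,g_i(t))\}_{i\in\N}$ on $[0,T]$, using the initial injectivity radius bound $\nu_0$ and the curvature bound $2K_0$, yields (after passing to a subsequence) a smooth complete Ricci flow $(M,g_\infty(t))$ on $(0,T)$ such that $(M_i,g_i(t))\to (M,g_\infty(t))$ in the Hamilton--Cheeger--Gromov sense.

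For the smooth structure on $X$, note that $|\Sec(g_i)|\leq \|\Rm(g_i)\|\leq K_0$, $\Inj(g_i)\geq \nu_0$, and the diameters are uniformly bounded; therefore \th\ref{T: Compactness in C alpha topology} produces a further subsequence $(M_i,g_i)$ converging in the $C^{1,\alpha}$-topology to a smooth manifold $M_0$ with a $C^{1,\alpha}$ Riemannian metric $g_0$. Since $C^{1,\alpha}$-convergence implies Gromov--Hausdorff convergence and such limits are unique, $(M_0,d_{g_0})$ is isometric to $(X,d)$, so $X$ is homeomorphic (in fact diffeomorphic) to $M_0$. To identify $M_0$ with $M$, I fix $t_0\in(0,T)$: the Hamilton--Cheeger--Gromov convergence at time $t_0$ furnishes diffeomorphisms $\psi_i:M\to M_i$ for large $i$, while the $C^{1,\alpha}$-convergence gives diffeomorphisms $\varphi_i:M_0\to M_i$, and the composition $\varphi_i^{-1}\circ\psi_i$ is a diffeomorphism $M\to M_0$. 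Pulling back, I view the entire limit flow $g_\infty(t)$ as a smooth Ricci flow on $M_0$.

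It remains to prove $g_\infty(t)\to g_0$ in the $C^{1,\alpha}$-topology as $t\to 0$. From $\|\Rm(g_\infty(t))\|\leq 2K_0$ one obtains a uniform sectional curvature bound; \th\ref{P: bounded Rm tensor implies metric equivalence} provides a uniform diameter bound along the flow; and the injectivity radius estimate embedded in the proof of \th\ref{T: Modified Hamiltons compactness theorem} supplies a uniform lower bound $\iota_0>0$ on $\Inj(g_\infty(t))$ for $t\in(0,T)$. Thus, for any sequence $t_n\to 0$, \th\ref{T: Compactness in C alpha topology} extracts a sub-subsequence $g_\infty(t_n)$ converging in $C^{1,\alpha}$ to some $C^{1,\alpha}$ Riemannian metric $\tilde g_0$ on $M_0$. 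On the other hand, \th\ref{T: Existence of limit} guarantees that $d_{g_\infty(t)}$ converges uniformly to some metric $d_0$ as $t\to 0$. A double-limit argument combining the uniform $t\to 0$ convergence $d_{g_i(t)}\to d_{g_i}$ coming from \th\ref{P: bounded Rm tensor implies metric equivalence}, together with the Gromov--Hausdorff convergences $d_{g_i(t)}\to d_{g_\infty(t)}$ (as $i\to\infty$) and $d_{g_i}\to d$, shows that $d_0$ coincides with $d_{g_0}$. The rigidity statement that a length metric admits essentially one $C^{1,\alpha}$ Riemannian structure inducing it then forces $\tilde g_0=g_0$ under the above identification, and since every such subsequence produces the same limit, the full family $g_\infty(t)$ converges to $g_0$ in $C^{1,\alpha}$.

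The main obstacle I anticipate is in this last step: upgrading the uniform convergence $d_{g_\infty(t)}\to d_0$ provided by \th\ref{T: Existence of limit} to $C^{1,\alpha}$-convergence of the Riemannian metrics, and identifying the resulting limit with $g_0$. Interchanging the $t\to 0$ and $i\to\infty$ limits requires the uniform curvature control from \th\ref{P: bounded Rm tensor implies metric equivalence} to be used delicately, and the rigidity input that a length metric determines a $C^{1,\alpha}$ Riemannian metric up to diffeomorphism is the key ingredient that glues the flow-side and the initial-data-side limits together.
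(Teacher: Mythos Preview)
Your proposal is correct and follows essentially the same route as the paper: Doubling Lemma for uniform existence time and curvature control, the modified Hamilton compactness theorem to produce the limit flow, \th\ref{T: Existence of limit} to get a limiting distance $d_0$ as $t\to 0$, a Gromov--Hausdorff triangle/double-limit argument to identify $d_0$ with $d$, and Peters--Greene--Wu $C^{1,\alpha}$-compactness to upgrade to $C^{1,\alpha}$-convergence.

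The one organizational difference is that you apply \th\ref{T: Compactness in C alpha topology} twice---once to the initial data $(M_i,g_i)$ to produce $(M_0,g_0)$ directly, and once to $(M,g_\infty(t_n))$---and then try to match the two limits via a rigidity statement (``a length metric determines its $C^{1,\alpha}$ Riemannian structure''). The paper instead applies Peters compactness only to the flow $(M,g_\infty(t_j))$, obtains a $C^{1,\alpha}$ limit $(Y,g_Y)$, and is content to conclude that $(Y,d_{g_Y})$ is \emph{isometric} to $(X,d)$ by uniqueness of Gromov--Hausdorff limits; it does not attempt to pin down $g_Y$ as a specific tensor on a fixed background. Your anticipated obstacle is therefore real, but the paper simply sidesteps it: the theorem only asserts convergence to \emph{some} $C^{1,\alpha}$ metric whose distance is isometric to $d$, not that every subsequential $C^{1,\alpha}$ limit coincides as a tensor. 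If you want the stronger statement that $\tilde g_0=g_0$ pointwise, you would indeed need that two $C^{1,\alpha}$ metrics inducing the same length distance on a fixed manifold are equal, which is true but is an extra ingredient beyond what the paper invokes.
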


\begin{proof}
First note that by Condition~\ref{T: Machinery (ii)}, $(M_i, g_i)$ in a non-collapsing sequence and hence $\dim X=\dim~M_i=n$, for all $i$.   We consider solutions $\{(M_i,g_i(t))\}$ to  \eqref{EQ: Ricci flow equation} with initial conditions $g_i(0) = g_i$. By \th\ref{L: Doubling Time Estimate}, these flows exist on the interval $[0,C/K_0]$ with non-empty interior for all $i\in \N$, where $C$ is  a constant depending only on $n$. 

Moreover, on the interval $[0,C/K_0]$, the flows $g_i(t)$ have bounded Riemannian curvature tensor as seen by the Doubling Time Estimate Lemma (\th\ref{L: Doubling Time Estimate}). That is for $t\in[0,C/K_0]$ it holds
\[
	\|\Rm(g_i(t))\|\leqslant 2K_0.
\]

Thus the hypotheses of \th\ref{T: Modified Hamiltons compactness theorem} are satisfied. This implies that for a selection of base points $p_i\in M_i$, there exists a smooth manifold $M$ equipped with a complete metric $g_\infty(t)$, with $t\in (0, C/K_0)$, which satisfies equation \eqref{EQ: Ricci flow equation}, and there exists $p\in M$ such that $(M_i,g_i(t),p_i)\to (M,g_\infty(t),p)$ in the Hamilton-Cheeger-Gromov sense. Moreover, for fixed $t\in (0,C/K_0)$ we have  $(M_i,g_i(t),p_i)\to (M,g_\infty(t),p)$ in the Cheeger-Gromov sense (see \th\ref{R: Hamilton-Cheeger-Gromov convergence implies Cheeger-Gromov convergence}).

Since for all $t\in [0,C/K_0]$ we have $\|\Rm(g_i(t))\|\leq 2K_0$ for $K_0>0$, we have that $-\overline{K}\leq \Ric(g_i(t))$ for some $\overline{K}>0$. Note that for $0<t<C/K_0$, we have that $2K_0<2C/t$. That is, 
\[
\|\Rm(g_i(t))\|\leq 2K_0<\frac{2C}{t}.
\]
Then  since $(M_i,g_i(t))$ converges in the Cheeger-Gromov sense to $(M,g_\infty(t))$ for all $t\in (0,C/K_0)$, the upper bounds on $\Rm_{ijk\ell}^2(g_i(t))$ are preserved in the limit $g_\infty(t)$ for all $t\in (0,C/K_0)$ (see for example \cite[Proof of Theorem 1.6]{Topping2014} or \cite[Page 2236]{DeruelleSchulzeSimon2022} together with the bound \eqref{EQ: lower uniform bound for volume of unit ball}). Thus,  for all $t\in (0, C/K_0)$ we have 
\[
\|\Rm(g_\infty(t))\|\leq2K_0< \frac{2C}{t},\quad \mbox{and} \quad -\overline{K}\leq \Ric(g_\infty(t)). 
\]
This implies that for a fixed $x_0\in M$, by \th\ref{T: Existence of limit} there exist a compact neighborhood $U$  of $x_0$ in $M$, and a metric $d_0$ on $U$ such that on $U$ the metrics $d_{g_\infty(t)}$ converge uniformly to the metric $d_0$. 

Consider another compact neighborhood $U'$ of a point $x'$ in $M$ given by \th\ref{T: Existence of limit}, equipped with the metric $d_0'$, such that $U \cap U' \neq \emptyset$.  By Theorem~\ref{T: Existence of limit}, for $x, y \in U \cap U'$, we have $d_{g_\infty(t)}(x, y) \to d_0(x, y)$ due to the uniform convergence over $U$, and $d_{g_\infty(t)}(x, y) \to d_0'(x, y)$ due to the uniform convergence over $U'$ as $t \to 0$. This implies that the distance functions $d_0$ and $d_0'$ must agree on $U \cap U'$. Consequently, there exists a well-defined distance function $d_0$ on $M$ such that $d_{g_\infty(t)}$ converges locally uniformly to $d_0$. Since $M$ is compact, this local uniform convergence implies that $d_{g_\infty(t)}$ converges to $d_0$ uniformly on $M$ as $t \to 0$. Therefore, $(M, d_{g_\infty(t)})$ converges to $(M, d_0)$ in the Gromov-Hausdorff sense.

Now, we prove  that $(M,d_0)$ is isometric to $(X,d)$. First observe that by the definition of the Hamilton-Cheeger-Gromov-convergence, the fact that each manifold $M_i$ in the sequence and $M$ are compact, and \th\ref{R: Approximation for spaces of bounded curvature is also Gromov-Hausdorff} the following statement holds:  for $\varepsilon>0$, there exists $N_1\in \N$ such that for all $t\in (0,C/K_0)$ and $i\geq N_1$ we have
\[
d_{\mathrm{GH}}((M_i,d_{g_i(t)}),(M,d_{g_\infty(t)}))< \varepsilon.
\]
Moreover since $(M_i,d_{g_i(0)})$ converges to $(X,d)$ in the Gromov-Hausdorff sense, there exists $N_2\in \N$ such that for $i\geqslant N_2$ we have
\[
d_{\mathrm{GH}}((M_i,d_{g_i(0)}),(X,d))< \varepsilon.
\]

Let $N=\max\{N_1, N_2\}$. Since the Ricci flow is smooth with respect to time, for the flow $(M_N, g_N(t))$ there exists $t_N$ such that for all $t\leq t_N$, 
\[
d_{\mathrm{GH}}((M_N,d_{g_N(0)}),(M_N, d_{g_N(t)}))< \varepsilon.
\]
Then  by construction, for $t\in(0, t_N)$ we have
\begin{linenomath}
\begin{align*}
d_{\mathrm{GH}}((M,d_{g_\infty(t)}),(X,d))\leq& d_{\mathrm{GH}}((M_N,d_{g_N(t)}),(M,d_{g_\infty(t)})) + d_{\mathrm{GH}}((M_N,d_{g_N(0)}),(M_N,d_{g_N(t)}))\\ 
&+  d_{\mathrm{GH}}((M_N,d_{g_N(0)}),(X,d)) < 3\varepsilon.
\end{align*}
\end{linenomath}

This implies that $(M,d_{g_\infty(t)})$ converges to $(X,d)$ in the Gromov-Hausdorff sense as $t\to 0$. Thus by uniqueness of limits we conclude that $(X,d)$ is isometric to $(M,d_0)$.

We prove now that for any $0<\alpha<1$ the metric $d_0$ on $M$ is, up to an isometry, induced by a $C^{1,\alpha}$-continuous Riemannian metric, and that $(M,g_\infty(t))$ converges to $(M,g)$ in the $C^{1,\alpha}$-sense. Since we have that $\Ric(g_\infty(t))\geq -\overline{K}$, then by \eqref{E: convergence rate limit flow} we have for any $x,y\in M$ and $t\in (0, C/K_0)$ that
\[
e^{\overline{K}(C/K_0)}\mathrm{Diam}(M,d_0)\geq e^{\overline{K}t}d_0(x,y)\geq d_{g_\infty(t)}(x,y).
\]
From this we conclude that $\mathrm{Diam}(M,d_{g_\infty(t)})\leq e^{\overline{K}(C/K_0)}\mathrm{Diam}(M,d_0)$ for $t\in (0,C/K_0)$.

Let $\{t_j\}_{j\in \N}$ be a sequence in $(0, C/K_0)$ converging to $0$. Consider the sequence $(M, g_\infty(t_j))$. By our assumptions this sequence satisfies the conditions of Theorem~\ref{T: Compactness in C alpha topology}, therefore, a subsequence $(M,g_\infty(t_{j_k}))$ converges in the $C^{1,\alpha}$-topology to  a $C^{1,\alpha}$-continuous Riemannian manifold $(Y,g_Y)$. By \th\ref{R: Approximation for spaces of bounded curvature is also Gromov-Hausdorff} it follows that $(M,d_{g_\infty(t_{j_k})})$ converges in the Gromov-Hausdorff topology to $(Y,d_{g_Y})$. By the uniqueness of the limits we have that $(Y,d_{g_Y})$ is isometric to $(M,d_0)$. Thus we have that $(M,g_\infty(t))$ converges to a space isometric to $(X,d)$ in the $C^{1,\alpha}$-topology. 
\end{proof}

Now we adapt the arguments in \cite[Section 5]{BurkhardtGuim2019} to show that the limit flow is independent of the sequence used to approximate $(X,d)$.

\begin{theorem}\th\label{T: Uniqueness}
Let $\{(N_i,h_i)\}_{i\in \N}$ be another  sequence  which  converges to $(X,d)$ in the Gromov-Hausdorff sense and satisfies the hypotheses of \th\ref{T: Machinery} for possibly a different positive uniform lower bound $\mu$ for the injectivity radii. Let $(N_\infty,h_\infty(t))$ be the Ricci flow given by \th\ref{T: Machinery} induced by the sequence. Then there exists a diffeomorphism $\alpha\colon M_\infty\to N_\infty$ such that for any $t\in (0,C/K_0)$ we have $\alpha\colon  (M_\infty,g_\infty(t))\to (N_\infty,h_\infty(t))$ is an isometry.
\end{theorem}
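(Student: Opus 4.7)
The plan is to realise both $(M_\infty, g_\infty(t))$ and $(N_\infty, h_\infty(t))$ as Hamilton--Cheeger--Gromov subsequential limits of a single common interleaved sequence, identify their $C^{1,\alpha}$-initial data via \th\ref{T: Machinery}, and then promote the resulting $t \to 0$ isometry to an isometry between the full Ricci flows by a forward uniqueness argument in the bounded-curvature class.

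Step 1 (interleaved sequence). I would form $\{(P_j, q_j)\}_{j \in \N}$ by setting $(P_{2i-1}, q_{2i-1}) := (M_i, g_i)$ and $(P_{2i}, q_{2i}) := (N_i, h_i)$. Since both parent sequences satisfy the hypotheses of \th\ref{T: Machinery}, so does the interleaved one, with curvature bound equal to the maximum of the two individual bounds and injectivity-radius lower bound $\min\{\nu_0, \mu\}$; moreover, the combined initial data still Gromov--Hausdorff converges to $(X,d)$, since both parent sequences do. Applying \th\ref{T: Machinery} to this interleaved sequence produces a Ricci flow $(P_\infty, p_\infty(t))$ on some interval $(0, C/K)$ which converges in the $C^{1,\alpha}$-topology as $t\to 0$ to a $C^{1,\alpha}$-continuous Riemannian metric $p_0$ on $P_\infty$, with $(P_\infty, d_{p_0})$ isometric to $(X,d)$. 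Note that the pure subsequences $\{(M_i, g_i(t))\}$ and $\{(N_i, h_i(t))\}$ are themselves subsequences of the interleaved flow, so $(M_\infty, g_\infty(t))$ and $(N_\infty, h_\infty(t))$ are HCG-subsequential limits of $\{(P_j, q_j(t))\}$.

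Step 2 ($C^{1,\alpha}$-isometry at $t=0$). Apply \th\ref{T: Machinery} separately to the three sequences to obtain $C^{1,\alpha}$-continuous Riemannian metrics $g_0^M$, $h_0^N$ and $p_0$ on $M_\infty$, $N_\infty$ and $P_\infty$ respectively, with all three underlying metric spaces isometric to $(X,d)$. By the $C^{1,\alpha}$-regularity of metric isometries between $C^{1,\alpha}$-Riemannian manifolds (cf. \cite{BerestovskijNikolaev1993}), these metric isometries are realised by $C^{1,\alpha}$-diffeomorphisms which preserve the Riemannian structure. Composing the two induced isometries $M_\infty \to P_\infty$ and $P_\infty \to N_\infty$ yields a $C^{1,\alpha}$-diffeomorphism $\alpha : M_\infty \to N_\infty$ with $\alpha^* h_0^N = g_0^M$.

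Step 3 (forward uniqueness; the main obstacle). It remains to upgrade $\alpha$ to an isometry between $g_\infty(t)$ and $h_\infty(t)$ for every $t \in (0, C/K)$. Both $g_\infty(t)$ and $\alpha^* h_\infty(t)$ are smooth, bounded-curvature Ricci flows on $M_\infty$ whose $C^{1,\alpha}$-limits as $t \to 0$ coincide with $g_0^M$, and both are HCG-limits of subsequences of the interleaved flow of Step~1. Adapting the scheme of \cite[Section~5]{BurkhardtGuim2019}, the $C^{1,\alpha}$-agreement at $t=0^+$, combined with Hamilton--DeTurck uniqueness of bounded-curvature Ricci flow starting from any positive time, should force $\alpha^* h_\infty(t) = g_\infty(t)$ for all $t > 0$. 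The principal obstacle is precisely this propagation: transferring the identification at $t = 0^+$ into equality at every positive time for Ricci flows whose initial data is only $C^{1,\alpha}$ requires the stability and approximation machinery from \cite{BurkhardtGuim2019}, and this is where most of the work lies.
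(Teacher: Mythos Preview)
The interleaving in Step~1 is harmless but contributes nothing: different subsequences of a precompact sequence can have non-isometric Hamilton--Cheeger--Gromov limits, and proving that all such limits here are isometric is precisely the content of the theorem. You never actually use $(P_\infty, p_\infty(t))$ after constructing it; the $C^{1,\alpha}$-isometry $\alpha$ of Step~2 is obtained directly from the fact that both $C^{1,\alpha}$-limits are isometric to $(X,d)$, with no need for the third flow.

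The genuine gap is in Step~3. Your map $\alpha$ is only a $C^{1,\alpha}$-diffeomorphism (as you yourself note in Step~2), so the pullback $\alpha^\ast h_\infty(t)$ is at best a $C^{0,\alpha}$ tensor on $M_\infty$, not a smooth Riemannian metric, let alone a smooth Ricci flow. The sentence ``Both $g_\infty(t)$ and $\alpha^\ast h_\infty(t)$ are smooth, bounded-curvature Ricci flows on $M_\infty$'' is therefore false, and you cannot invoke Hamilton--DeTurck forward uniqueness for two smooth flows with common initial data. The obstacle you name---propagation from $t=0^+$---is real, but you have misdiagnosed its source: it is the low regularity of $\alpha$, not merely the weak nature of the $t\to 0$ convergence, that blocks the direct argument.

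The paper deals with exactly this point as follows. Writing $\theta\colon (Y,g_Y)\to(\bar Y,h)$ for the $C^{1,\alpha}$-isometry between the two initial limits, the $C^{1,\alpha}$-convergence along a sequence $t_{i_j}\searrow 0$ makes $\theta$ a $(1+\varepsilon_{i_j})$-bi-Lipschitz map between the pulled-back \emph{smooth} metrics $\phi_{i_j}^\ast g_\infty(t_{i_j})$ and $\psi_{i_j}^\ast h_\infty(t_{i_j})$. One then applies Karcher's centre-of-mass smoothing \cite[Section~4]{Karcher1977} to replace $\theta$ by \emph{smooth} $(1+\delta_{i_j})$-bi-Lipschitz maps $\theta_{i_j}$ between these time-$t_{i_j}$ slices, with $\delta_{i_j}\searrow 0$. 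These converge uniformly by Arzel\`a--Ascoli, and only at this stage does the argument of \cite[Lemma~5.8]{BurkhardtGuim2019} apply to produce a single diffeomorphism $\alpha\colon M\to N$ with $\alpha^\ast h_\infty(t)=g_\infty(t)$ for all $t\in(0,C/K_0)$. The Karcher smoothing step is the ingredient missing from your outline.
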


\begin{proof} 
Assume that $\{(N,h_i)\}$ is another sequence of $n$-dimensional closed Riemannian manifolds converging to $(X,d)$ in the Gromov-Hausdorff sense such that  $\delta\leq \Sec(h_i)\leq \Delta$ and $\Inj(h_i)\geq \mu>0$. Then we obtain a Ricci flow $(N,h_\infty(t))$ defined over $(0,C/K_0)$ the same interval as $(M,g_\infty(t))$ and a $C^{1,\alpha}$ Riemannian manifold $(\bar{Y},h)$ isometric to $(X,d)$ such that $(N_\infty,h_\infty(t))$ converges in $C^{1,\alpha}$-topology to $(\bar{Y},h)$ when $t\to 0$.

Observe that there are $C^{1, \alpha}$ isometries $\phi\colon (X, g)\to (Y, g_Y)$ and $\psi\colon (X,g)\to (\bar Y, h)$. Therefore, we have a $C^{1, \alpha}$ isometry $\theta\colon (Y, g_Y)\to (\bar Y, h)$.

Since we can choose a sequence $t_{i_j}\searrow 0$ such that we have (smooth) diffeomorphisms $\phi_{i_j}\colon Y\to M$  and $\psi_{i_j}\colon \bar Y\to N$, and $\phi_{i_j}^\ast(g_\infty(t_{i_j}))\to g_Y$ and $\psi_{i_j}^\ast(h_\infty(t_{i_j}))\to h$ in $C^{1, \alpha}$,  there exists a sequence of numbers $\varepsilon_{i_j}\searrow 0$ such that $\theta\colon (Y, \phi_{i_j}^\ast(g_\infty(t_{i_j})))\to (\bar Y, \psi_{i_j}^\ast(h_\infty(t_{i_j})))$ is $(1+\varepsilon_{i_j})$-bi-Lipschitz.

Now, from \cite[Section~4]{Karcher1977}, there exists a sequence of numbers $\delta_{i_j}\searrow 0$ and a  sequence of smooth functions     $\theta_{i_j}\colon (Y, \phi_{i_j}^\ast(g_\infty(t_{i_j})))\to (\bar Y, \psi_{i_j}^\ast(h_\infty(t_{i_j})))$  which are  $(1+\delta_{i_j})$-bi-Lipschitz. Moreover, $\theta_{i_j}\colon(Y, g_Y)\to (\bar Y, h)$  is  $(1+\delta_{i_j}+\gamma_{i_j})$-bi-Lipschitz, for all $i_j$, for some sequence $\gamma_{i_j}\searrow 0$. Hence by Arzelà-Ascoli, $\theta_{i_j}$ converges uniformly to a continuous map $\Theta$. Now, we follow the proof  of \cite[Lemma~5.8]{BurkhardtGuim2019} to conclude that there is a diffeomorphism $\alpha \colon M\to N$ such that  $\alpha^\ast (h_\infty(t))=g_\infty(t)$ for all $t\in (0, C/K_0)$.
\end{proof}

With \th\ref{T: Machinery} and \th\ref{T: Uniqueness} proven we almost have the proof of \th\ref{MT: Main Theorem 1}. We only have to prove that the hypothesis \th\ref{T: Machinery}~\eqref{T: Machinery (i)} is satisfied.

\begin{proof}[Proof of \th\ref{MT: Main Theorem 1}]
Consider $\{(M_n,g_n)\}_{n\in \N}$ the sequence of $m$-dimensional compact manifolds converging to $(X,d)$ in the Gromov-Hausdorff topology with 
\[
\delta\leq \mathrm{Sec}(g_n)\leq \Delta\quad \mbox{and}\quad \mathrm{Injrad}(g_n)\geq v>0.
\]
By \th\ref{L: Sectional curvature bounds imply curvature bound on Riemannian tensor} we have that for each index $i\in \N$ we have
\[
 \|\Rm(g_n)\|\leq m^2\left(\Delta-\delta\right)
\]
Setting $K = m^2(\Delta-\delta)$, by \th\ref{L: Doubling Time Estimate} we have that the solutions  $g_n$ of \eqref{EQ: Ricci flow equation} with initial conditions $g_n(0) = g_n$ are defined on the open interval $(0,C/K)$, where $C=C(m)$ is a constant depending only on the dimension of $M$. Moreover on this interval we have that  $\|\Rm(g_n(t))\|$ is bounded by $2K$ for all $t\in (0,C/K)$.

Thus the hypothesis of  \th\ref{T: Machinery} are satisfied for the sequence $\{g_n\}_{n\in \N}$. This implies that there exists a manifold $M$ and  a solution to \eqref{EQ: Ricci flow equation} $g_\infty(t)$ such that,  up to a subsequence, the flows  $g_n(t)$ solving \eqref{EQ: Ricci flow equation} with initial conditions $g_n(0) =g_n$ converge to $g_\infty(t)$ in the Hamilton-Cheeger-Gromov-sense. We also have that there exists a Riemannian manifold $(Y,g_Y)$ with a $C^{1, \alpha}$-continuous metric  such that  for every sequence $t_j\to 0$, there is   a subsequence $t_{j_k}$ such that the sequence  $\{(M,g_\infty(t_{j_k}))\}_{k\in \N}$ converges to $(Y,g_Y)$ in the $C^{1,\alpha}$-sense, and $(Y,d_{g_Y})$ is isometric to $(X,d)$ via an isometry $I\colon (Y,d_{g_Y})\to (X,d)$.

Moreover, since $(X,d)$ is both an Alexandrov space of curvature $\geq \delta$ and locally a $\mathsf{CAT}(\Delta)$-space, as pointed out in \cite[p. 221, Remark 2]{BerestovskijNikolaev1993} $X$ is a smooth manifold and the metric $d$ is induced by a $C^{1,\alpha}$-continuous Riemannian metric $g$. Then by \cite[Theorem 2.1]{Taylor2006} the isometry $I\colon (Y,d_{g_Y})\to (X,d)$ is $C^{2}$-continuous. Thus we have that $I_\ast(g_Y)=g$. We consider $\phi_k$ the sequence  diffeomorphisms onto its images $\phi_k\colon Y\to M_k$ realizing the $C^{1,\alpha}$-convergece. Then the sequence $\phi_k\circ I^{-1}$ gives us that the sequence  $\{(M,g_\infty(t_{j_k}))\}_{k\in \N}$ converges in the $C^{1,\alpha}$-sense to $(X,g)$. Since the sequence ${t_{j}}_{j\in \N}$ with $t_{j}\to 0$ is arbitrary, we conclude that that $(M,g_\infty(t))$ converges to $(X,g)$ in the $C^{1,\alpha}$-sense as $t\to 0$.

By \th\ref{T: Uniqueness}, we have that up to isometry, the flow is unique.
\end{proof}

We conclude this section  by providing the proofs of \th\ref{MT: Main Theorem} and \th\ref{MT: Equivariant}.

\begin{proof}[Proof of \th\ref{MT: Main Theorem}]
Fix $\varepsilon>0$. By \th\ref{T: Approximating bounded spaces}, there exists a sequence $\{(M,g^\varepsilon_i)\}_{i\in \N}$ of smooth Riemannian metrics converging to a  $C^{1,\alpha}$-continuous metric $g^\varepsilon$ on $X$, with $d_{g^\varepsilon} = d$.

Set $\delta_\varepsilon = k-\varepsilon$ and $\Delta_\varepsilon = K+\varepsilon$. Then we have 
\[
\delta_\varepsilon\leq \mathrm{Sec}(g_i^\varepsilon)\leq \Delta_\varepsilon.
\]

By \th\ref{L: Upper and lower curvature bounds imply that injectivity radius is positive} we have that $\Inj(g^\varepsilon_i)\geq \nu_0$ for some $\nu_0>0$.

Thus the hypotheses of \th\ref{MT: Main Theorem 1} are satisfied and our conclusion follows. 
\end{proof}

\begin{proof}[Proof of \th\ref{MT: Equivariant}] 
Let $\varepsilon>0$ be fixed. 
By \cite[Theorem D]{Ahumada2023},  we can assume that for the sequence of Riemannian metrics $g_n=g_n^\varepsilon$ on $X=M$  given by \th\ref{T: Approximating bounded spaces}, we have $\delta-\varepsilon\leq \Sec(g^\varepsilon_n)\leq \Delta+\varepsilon$, and $G<\mathrm{Iso}(M,g_n^\varepsilon)$. Moreover, by the uniqueness of the Ricci flow with respect to smooth initial conditions, we have $G<\mathrm{Iso}(M_n,g_n(t))$. Now let $(M_\infty,g_\infty(t))$ be the solution to the Ricci flow given by \th\ref{MT: Main Theorem 1}. Let $K_0 = \dim(X)^2(\Delta-\delta)$, and $C=C(\dim(X))$ be the constant from \th\ref{L: Doubling Time Estimate}. Then for $t_0\in (0,C/K_0)$ fixed, we have that $(M,g_n(t_0))$ converges to $(M_\infty,g_\infty(t_0))$ in the Gromov-Hausdorff sense. Then  by \cite[Proposition 3.6]{FukayaYamaguchi1992}, there exists a closed subgroup $\Gamma_{t_0}$ of $\mathrm{Iso}(M_\infty,g_\infty(t_0))$. Since $\mathrm{Iso}(M_\infty,g_\infty(t_0))$ is a Riemannian manifold then by \cite{MyersSteenrod1939}, its group of isometries is a Lie group, and thus $\Gamma_{t_0}$ is a Lie group. Moreover, by \th\ref{R: curvature bounds pass to the limit},  we have that $\Sec(g_\infty(t_0))\geq \delta-\varepsilon$. Further, since the sequence $(M,g_n(t_0))$ is non-collapsed, by \cite[Proposition 4.1]{Harvey2016}, there also exists injective Lie group morphism $\bar{\theta}_{n,t_0}\colon G\to \Gamma_{t_0}$.

As in the proof of \th\ref{T: Machinery}, we observe that given $\delta>0$ there exists $N\in \N$ and $t_N\in (0,C/K_0)$ such that for any $t\in (0,t_N)$ we have 
\begin{linenomath}
\begin{align*}
d_{eqGH}((X,d,G),(M,d_{g_N},G))&<\delta,\\
d_{eqGH}((M,d_{g_N},G),(M,d_{g_{N}(t)},G))&<\delta,\\
d_{eqGH}((M,d_{g_{N}(t)},G),(M_\infty,d_{g_\infty(t)},\Gamma_t))&<\delta.
\end{align*}
\end{linenomath}

Then using the triangle inequality we get that $d_{eqGH}((X,d,G),((M_\infty,d_{g_\infty(t)},\Gamma_t))<3\delta$. That is, $(M_\infty,d_{g_\infty(t)},\Gamma_t)$ converges to $(X,d,G)$ in the equivariant-Gromov-Hausdorff sense as $t\to 0$. 

Again by \cite[Proposition 4.1]{Harvey2016}, there exists injective Lie group morphisms $\bar{\theta}_{t}\colon \Gamma_t\to G$. Thus we have for all $t\in (0,t_0)$ that
\[
\dim(G)\geq \dim(\Gamma_t)\geq \dim G.
\]
Now by \cite[Theorem A]{AlAttar2024} we conclude that $(X,G)$ is equivariantly homeomorphic to $(M_\infty,\Gamma_t)$. That is, $\Gamma_t$ is isomorphic to $G$.
\end{proof}

\begin{proof}[Proof of \th\ref{MC: main corollary}]
 Observe that since $4\delta-1>0$ we can find $(4 \delta-1)/5>\varepsilon_0>0$ small enough fixed, such that $\delta-\varepsilon_0>1/4$. Then by \th\ref{T: Approximating bounded spaces} there exists a sequence of smooth Riemannian metrics $g_n^{\varepsilon_0}$ on $M=X$ such that $1/4<\delta-\varepsilon_0\leq \Sec(g_n^{\varepsilon_0})\leq 1+\varepsilon_0$. Observe now that by our choice of $\varepsilon_0$ we have
 \[
 	\frac{\delta-\varepsilon_0}{1+\varepsilon_0}>\frac{1}{4}.
 \]
Then assuming 
$c=1+\varepsilon_0$, we have that the metrics $cg_n^{\varepsilon_0}$ have
\[
\frac{1}{4}<\Sec(c g_n^{\varepsilon_0})\leq 1.
\]
Observe that $(M,cg_n^{\varepsilon_0})$ converges in the Gromov-Hausdorff sense to $(X,cd)$ and $\curv(cd)\geq \delta/(1+\varepsilon_0)$. Since $(4\delta-1)/5>\varepsilon_0$ we have that  $\delta/(1+\varepsilon_0)>1/4$. 

Thus by \cite[Theorem 1]{BrendleSchoen2009} it follows that $M$ is diffeomorphic to a round spherical space form $(\Sp^m(1)/\Gamma$ for some $\Gamma<\mathrm{O}(m+1)$, and thus the conclusion follows.
\end{proof}

\bibliographystyle{siam}
\bibliography{References}

\end{document}